\documentclass[a4,12pt,leqno]{amsart}
\usepackage{amsmath}
\pdfoutput=1
\usepackage{amsfonts}
\usepackage{amssymb}
\usepackage{mathrsfs}
\usepackage{sidecap}
\usepackage{float}
\usepackage{graphicx}
\restylefloat{figure}

\swapnumbers \theoremstyle{plain}
\newtheorem{thm}[equation]{Theorem}
\newtheorem{lemma}[equation]{Lemma}

\theoremstyle{definition}

\newtheorem{conj}[equation]{Conjecture}
\newtheorem{exmp}[equation]{Example}
\newtheorem{notation}[equation]{Notation}
\theoremstyle{remark}
\newtheorem{rem}[equation]{Remark}

\newtheorem{algorithm}[equation]{Algorithm}
\numberwithin{equation}{section}

\pagestyle{headings}

\begin{document}

\title[Algorithm for centered Hausdorff measure]{An algorithm for computing the centered Hausdorff measures of
self-similar sets}

\author{Marta Llorente}
\address{Marta Llorente: Departamento de
 An\'{a}lisis Econ\'{o}mico: Econom\'{i}a Cuantitativa\\Universidad Aut\`{o}noma de Madrid, Campus de Cantoblanco, 28049 Madrid \\  Spain\\ }
\email{m.llorente@uam.es }

\author{Manuel Mor\'{a}n}
\address{Manuel Mor\'{a}n: Departamento An\'{a}lisis Econ\'{o}mico I\\
Universidad Complutense de Madrid \\
Campus de Somosaguas, 28223 Madrid, Spain\\}
\email{mmoranca@ccee.ucm.es }
\thanks{This research was supported by the Ministerio de Educaci\'{o}n y Ciencia, research project MTM2006-02372.}
\subjclass[2000]{Primary 28A75, 28A80} \keywords{centered
Hausdorff measure, self-similar sets, computability of fractal
measures}
\date{}

\begin{abstract}
We provide an algorithm for computing the centered Hausdorff measures
of self-similar sets satisfying the strong separation
condition. We prove the convergence of the algorithm and test
its utility on some examples.
\end{abstract}

\maketitle
\section{Introduction\protect\bigskip}

We present an algorithm that takes as input a list
$\Psi=\left\{ f_{1},f_{2},...,f_{m}\right\} $ of contracting similitudes in $%
\mathbb{R}^{n}$ satisfying the strong separation condition (see
section \ref{Section 2})
and gives as output an estimate of the
\textit{$s$-dimensional centered Hausdorff measure}, $C^{s}(E)$, of the
self-similar set $E$ generated by $\Psi $. Here $s$ is both the similarity and Hausdorff dimension of $%
E$ and it can be computed from the contracting factors of the
similitudes (see (\ref{dim})). To our knowledge this is the first
attempt at automatic computation of the exact value of a metric
measure (e.g. Hausdorff, packing, spherical, centered, ...), a topic
which has generated a
considerable quantity of research (\cite{ayer}, \cite{daitian}, \cite%
{jiazhouzhu}, \cite{jiazhouzhuluo}, \cite{meifeng&},\cite{zhiweizuoling},
\cite{zhou}, \cite{zhoufeng} \cite{zhufeng1}, \cite{zhuzhou}, \cite{zhuzhou0}%
, \cite{zhuzhou1}, etc.)


The centered Hausdorff measure is a variant of the Hausdorff
measure. These measures differ mainly in the natures of the coverings used in their definitions.
In the case of the centered Hausdorff
measure we consider only covers by closed balls $B(x_{i})$
 centered at points $x_{i}$ in the given set.

The standard definition of $C^{s}$ on subsets of $\mathbb{R}^{n}$
consists of two steps. Given $A\subset \mathbb{R}^{n}$, we first
compute the premeasure $C_{0}^{s}$ as
\begin{equation}\label{paso1}
C_{0}^{s}A=\lim_{\delta \rightarrow 0}\inf \left\{ \sum_{i\in \mathbb{N}%
}\left\vert B_{i}(x_{i})\right\vert ^{s}:A\subset \cup B_{i}\text{ and }%
\left\vert B_{i}\right\vert <\delta ,x_{i}\in A\text{ for all }i\right\} .
\end{equation}
Because the suppression of good candidates for the $x_{i}$'s may cause
an increase of the infimum, this premeasure is not monotone,
although it is $\sigma-$additive (see \cite{tricot0}). In order
to avoid this difficulty we define
\begin{equation}\label{paso2}
C^{s}A=\sup \left\{ C_{0}^{s}B:B\subset A,B\text{ closed}\right\}.
\end{equation}%
The set function $C^{s}$ so obtained is a metric measure. It
turns out that the centered Hausdorff measure is bounded by constant
multiples of the ordinary Hausdorff measure (see
\cite{sainttricot}). More precisely
\begin{eqnarray*}
2^{-s}C^{s}(E)\le H^{s}(E)\le C^{s}(E)
\end{eqnarray*}  and so
the centered Hausdorff dimension and the ordinary Hausdorff dimension coincide.
In particular, for the self-similar
set $E,$ we have that $0<C^{s}(E)<\infty $.
Thus $C^{s}$ is a nice measure, but there remains the question of how to compute $C^{s}A$ for
some subset $A\subset E$. A simplification comes from the
following observation: for any Borel set $A \subset
\mathbb{R}^{n}$,
 $$C^{s}A=C^{s}E\mu A,$$ where is $\mu $ the so
called \textit{natural }or \textit{empirical probability measure
on $E$.}
Therefore, the problem of computing $ C^{s} $ on $E$ reduces to
computing $C^{s}E$. Observe that the $\mu-$measure of any open
subset $A\subset $ $E$ with boundary $\partial A$ having null $\mu
-$measure (and in particular of any open ball) can be easily
obtained with arbitrary accuracy and, hence, the $\mu$-measures of
compact subsets of $E$ with $\mu-$null boundaries can also be
computed through their complements. This gives a vast class
$\mathcal{C}$ of Borel sets with computable $\mu -$measure. In
turn, the $C^{s}-$measure of any $C^{s}-$measurable set can be approximated
with arbitrary accuracy by the $C^{s}-$measures of closed sets (see
\textbf{\cite{falconer} }Theorem 1.6 b).

How, then, to compute $C^{s}E$? Given the definitions, the task
seems out of reach. The first obstruction comes from the need of
the second step \ref{paso2} in the definition of $C^{s}$.
However, in \cite{llorentemoran1} it is proved
that, for any subset $A$ of a self-similar set $E$ as above the
measure and the premeasure coincide:

\begin{thm}[Theorem 3 in \cite{llorentemoran1}]
Let $A$ be either a closed or an open subset of a self-similar set
$E$ satisfying the open set condition. Then
$C_{0}^{s}A=\nolinebreak C^{s}A$.
\end{thm}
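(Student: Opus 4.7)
\medskip

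\noindent\textbf{Proof plan.} I propose to handle the closed and open cases in sequence: first prove the theorem for closed $A$ by matching two inequalities, then deduce the open case from the closed one using the $\sigma$-additivity of $C_{0}^{s}$ established by Tricot in \cite{tricot0}.

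For a closed $A\subset E$, the inequality $C_{0}^{s}A\leq C^{s}A$ is immediate by choosing $B=A$ in~(\ref{paso2}). The reverse inequality is the heart of the matter. Combining the natural-measure identity $C^{s}(B)=C^{s}(E)\mu(B)$ with the classical density comparison of Saint-Raymond--Tricot \cite{sainttricot} for the centered Hausdorff measure yields
\begin{equation*}
\limsup_{r\to 0^{+}}\frac{\mu B(x,r)}{(2r)^{s}}=\frac{1}{C^{s}(E)}\qquad\text{for $\mu$-a.e.\ }x\in E.
\end{equation*}
An exhaustion (setting $G_{n,\varepsilon}:=\{x\in E:\mu B(x,r)\leq(1+\varepsilon)(2r)^{s}/C^{s}(E)\text{ for all }r\leq 1/n\}$) then furnishes, for prescribed $\varepsilon,\eta>0$, a set $G\subset E$ with $\mu(E\setminus G)<\eta$ and a threshold $r_{0}>0$ on which the density bound holds \emph{uniformly} for all $r<r_{0}$. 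Given an admissible $\delta$-cover $\{B(x_{i},r_{i})\}_{i}$ of $A$ with $\delta<r_{0}$, a perturbation argument pushes each centre $x_{i}\in A\setminus G$ to a nearby point of $A\cap G$ at arbitrarily small cost, after which the uniform bound yields
\begin{equation*}
\sum_{i}(2r_{i})^{s}\geq \frac{C^{s}(E)}{1+\varepsilon}\sum_{i}\mu B(x_{i},r_{i})\geq \frac{C^{s}(E)\,\mu(A)}{1+\varepsilon}=\frac{C^{s}(A)}{1+\varepsilon}.
\end{equation*}
Letting $\delta\to 0$, $\eta\to 0$ and $\varepsilon\to 0$ produces $C_{0}^{s}A\geq C^{s}A$.

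For an open $A\subset E$, approximate from within by a nested sequence of closed subsets $B_{n}\nearrow A$ with $\mu(A\setminus B_{n})\to 0$. By the closed case, $C_{0}^{s}(B_{n})=C^{s}(B_{n})$; since $C^{s}$ is a genuine Borel measure, $C^{s}(B_{n})\to C^{s}(A)$. The $\sigma$-additivity of $C_{0}^{s}$ from \cite{tricot0}, applied to the disjoint decomposition $A=B_{1}\sqcup(B_{2}\setminus B_{1})\sqcup\cdots$, then gives $C_{0}^{s}(A)=\lim_{n}C_{0}^{s}(B_{n})=C^{s}(A)$.

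The main obstacle is the perturbation step inside the density argument: an adversarial cover may cluster its centres in the exceptional null set $E\setminus G$, and these must be pushed into $A\cap G$ in a coordinated way without materially enlarging the total $s$-content. This uses heavily that $\spt\mu=E$ together with the fact that the $\mu$-mass in any small ball is concentrated in $G$ when $\eta$ is small, so that for each bad centre $x_{i}$ a replacement $y_{i}\in A\cap G$ can be found within distance $\varepsilon r_{i}$; a Vitali-type bookkeeping is needed to ensure that the aggregate multiplicative cost remains of order $1+O(\varepsilon)$. A secondary technical point is to confirm that the $\sigma$-additivity of \cite{tricot0} applies to the particular Borel decomposition invoked in the open-set step.
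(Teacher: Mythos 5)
First, a point of context: the paper you were given does not prove this statement at all --- it is imported verbatim as Theorem~3 of \cite{llorentemoran1} --- so there is no internal proof to measure your attempt against; I can only judge the proposal on its own terms. Your easy inequality $C_0^sA\le C^sA$ for closed $A$ is correct, and the reduction of the open case to the closed case via the $\sigma$-additivity of $C_0^s$ from \cite{tricot0} is plausible (modulo verifying that that additivity applies to the Borel partition $A=B_1\sqcup(B_2\setminus B_1)\sqcup\cdots$, as you note). The genuine gap is the step you yourself flag as ``the main obstacle'': the perturbation of bad centers. It is not a secondary technicality --- it is the entire content of the reverse inequality, and as described it fails. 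The density identity you invoke holds only for $\mu$-a.e.\ $x$, and the exceptional null set can be dense in $A$; since an admissible cover need only have its centers in $A$, an adversarial cover can place \emph{every} center in that null set. Your repair needs, for each bad center $x_i$, a point of $G$ within distance $\varepsilon r_i$. But $\mu(E\setminus G)<\eta$ only forces $B(x,\varepsilon r)\cap G\neq\emptyset$ when $a(\varepsilon r)^s>\eta$ (using \eqref{regularity}); for each fixed $\eta$, hence fixed $G$, the definition of $C_0^s$ requires $\delta\to0$, so the cover eventually lives entirely below that threshold, and the assertion that ``the $\mu$-mass in any small ball is concentrated in $G$'' fails exactly where you need it. No Vitali bookkeeping can repair the possible nonexistence of any replacement point.

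The argument can be salvaged by abandoning the a.e.\ limsup density theorem in favor of the uniform, all-scales bound already contained in \eqref{cenosc} of Theorem~\ref{tcentered}: $\mu(B(x,d))\le (2d)^s/C^s(E)$ for \emph{every} $x\in E$ and \emph{every} $d>0$, with no exceptional set. Then for any countable cover of $A$ by balls centered at points $x_i\in A\subset E$,
\begin{equation*}
\sum_i(2r_i)^s\;\ge\;C^s(E)\sum_i\mu\bigl(B(x_i,r_i)\bigr)\;\ge\;C^s(E)\,\mu(A)\;=\;C^s(A),
\end{equation*}
so $C_0^sA\ge C^sA$ with no perturbation at all, and the closed case follows. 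The one caveat is logical order: within \cite{llorentemoran1} the inf formula \eqref{cenosc} is Theorem~5 and the present statement is Theorem~3, so before using this shortcut you must check that the lower bound $C^s(E)\ge\inf$ in \eqref{cenosc} is obtained independently of Theorem~3 (it is --- it comes from this same covering computation applied to $E$ itself, which is closed, while the upper bound comes from the tiling principle); otherwise the argument would be circular.
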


With this result available, $C^{s}$ seems easier to compute than the
Hausdorff or spherical Hausdorff measure. Namely, the differences
between these three measures are
that, for the Hausdorff measure, one
optimizes among coverings by arbitrary convex sets; for the
spherical Hausdorff measure one uses arbitrary balls; and for
$C^s$ one uses coverings by balls with centers in $E$.
For the Hausdorff and the spherical Hausdorff measures,
the classes of available coverings are larger, and hence it is more
difficult to find optimal coverings.

The second step which permits the computation of $C^{s}$ was also
given in \cite{llorentemoran1}. There it is proved that computing
$C^{s}$ is equivalent to finding a centered ball with optimal
inverse density:
\begin{thm}[Theorem 5 in \cite{llorentemoran1}]
\label{tcentered}
Suppose the invariant set $E$ of the system $\Psi$ satisfies the open set condition, with $\dim _{H}E=s$ and $|E|=R$, and let $%
\mu $ be the normalized Hausdorff measure on $E$. Then
\begin{equation}
C^{s}E=\inf \left\{ \frac{(2d)^{s}}{\mu (B(x,d))}:x\in
E,d>0\right\} =:D_{C}^{-1} . \label{cenosc}
\end{equation}
Moreover, if $\ \Psi $\ satisfies the SSC then
\begin{equation}
C^{s}E=\min \left\{ \frac{(2d)^{s}}{\mu (B(x,d))}:x\in E\text{ \
and }c\leq d\leq R\right\},  \label{censsc}
\end{equation}%
where $c:=\min_{i,j\in M,i\neq j}(f_{i}E,f_{j}E)$ and $R:=|E|.$
\end{thm}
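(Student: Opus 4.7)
Set $\kappa(x,d):=(2d)^{s}/\mu(B(x,d))$ so that $D_{C}^{-1}=\inf_{x\in E,\,d>0}\kappa(x,d)$. By the preceding theorem I may replace $C^{s}E$ with the premeasure $C_{0}^{s}E$ throughout. The argument splits into three parts: a one-line density lower bound $C_{0}^{s}E\ge D_{C}^{-1}$; the reverse inequality via a self-similar Vitali covering; and a separate argument using the SSC to restrict the infimum to the compact window $c\le d\le R$ and identify it with a minimum.

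\textbf{Lower bound.}
For any admissible $\delta$-cover $\{B(x_{i},d_{i})\}$ of $E$ with $x_{i}\in E$, the definition of $D_{C}^{-1}$ gives $\mu(B(x_{i},d_{i}))\le D_{C}(2d_{i})^{s}$. Hence $\sum_{i}(2d_{i})^{s}\ge D_{C}^{-1}\sum_{i}\mu(B(x_{i},d_{i}))\ge D_{C}^{-1}\mu(E)=D_{C}^{-1}$, and taking the infimum yields $C_{0}^{s}E\ge D_{C}^{-1}$.

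\textbf{Upper bound --- the main step.}
Fix $\varepsilon>0$ and choose $(x_{0},d_{0})$ with $\kappa_{0}:=\kappa(x_{0},d_{0})<D_{C}^{-1}+\varepsilon$. I propose to work with the self-similar family
$$\mathcal{F}=\bigl\{f_{\mathbf{i}}(B(x_{0},d_{0}))=B(f_{\mathbf{i}}(x_{0}),r_{\mathbf{i}}d_{0}):\mathbf{i}\in M^{*}\bigr\}.$$
Every member is centred in $E$; once $|\mathbf{i}|$ is deep enough that $f_{\mathbf{i}}(B_{0})$ meets $E$ only inside $f_{\mathbf{i}}(E)$, self-similarity gives $\mu(f_{\mathbf{i}}(B_{0}))=r_{\mathbf{i}}^{s}\mu(B_{0})$, so $|f_{\mathbf{i}}(B_{0})|^{s}=\kappa_{0}\,\mu(f_{\mathbf{i}}(B_{0}))$ and the density ratio $\kappa_{0}$ is preserved at every scale. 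I then need $\mathcal{F}$ to be a Vitali fine cover of $\mu$-a.e.\ $E$: pulling $\mu$ back to the Bernoulli measure on $M^{\mathbb{N}}$ via the coding map, the inclusion ``$y\in f_{i_{1}\cdots i_{n}}(B_{0})$'' is equivalent to ``the shifted address $\sigma^{n}$ of $y$ codes a point of $B_{0}\cap E$'', which by ergodicity of the shift occurs for infinitely many $n$ at $\mu$-a.e.\ $y$, with radii $r_{i_{1}\cdots i_{n}}d_{0}\to 0$. Vitali's theorem then extracts a disjoint subfamily $\{B_{k}\}\subset\mathcal{F}$ with $\mu(E\setminus\bigcup_{k}B_{k})=0$, and
$$\sum_{k}|B_{k}|^{s}=\kappa_{0}\sum_{k}\mu(B_{k})=\kappa_{0}<D_{C}^{-1}+\varepsilon.$$
The leftover set is $\mu$-null, hence $H^{s}$-null on $E$ by equivalence of $\mu$ and $H^{s}|_{E}$ under the OSC, and any $H^{s}$-null subset of $E$ admits centred-ball covers of total cost at most $\varepsilon$ (replace each covering set by the ball of twice its diameter centred at a point of it), so it contributes negligibly. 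Letting $\varepsilon\to 0$ gives $C_{0}^{s}E\le D_{C}^{-1}$. The hard part is exactly the fine-cover verification: density of the centres $\{f_{\mathbf{i}}(x_{0})\}$ in $E$ is \emph{not} enough, and the ergodic recurrence of the shift through the cylinder encoding $B_{0}$ is the genuine ingredient.

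\textbf{Restriction to $c\le d\le R$ and attainment.}
For $d>R=|E|$, $B(x,d)\supset E$ and $\kappa(x,d)=(2d)^{s}>(2R)^{s}=\kappa(x,R)$, so the search can be confined to $d\le R$. For $d<c$, the SSC forces $x\in f_{i_{0}}(E)$ for a unique $i_{0}\in M$ and $B(x,d)\cap E=f_{i_{0}}(B(f_{i_{0}}^{-1}(x),d/r_{i_{0}})\cap E)$, whence $\kappa(x,d)=\kappa(f_{i_{0}}^{-1}(x),d/r_{i_{0}})$; iterating scales $d$ up into $[c,c/r_{\min})$ in finitely many steps. Hence the infimum coincides with that over the compact set $\{(x,d):x\in E,\,c\le d\le R\}$. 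Since $(x,d)\mapsto\mu(\overline{B(x,d)})$ is upper semicontinuous (closed balls depend Hausdorff-continuously on $(x,d)$ and $\mu$ is a finite Radon measure), $\kappa$ is lower semicontinuous on this compact set and attains its infimum, giving the minimum asserted in the theorem.
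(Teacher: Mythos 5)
The paper does not actually prove this statement: it is imported verbatim as Theorem~5 of \cite{llorentemoran1}, so there is no in-paper proof to compare against. Your reconstruction follows what that reference (and the ``self-similar tiling principle'' of \cite{moran1}, which the introduction alludes to) actually does: a one-line density estimate for the lower bound, and for the upper bound a packing of $E$, up to a $\mu$-null set, by disjoint scaled copies $f_{\mathbf{i}}(B(x_0,d_0))$ of a near-optimal ball, each contributing cost $\kappa_0\,r_{\mathbf{i}}^{s}\mu(B_0)$. Your ergodic-recurrence justification of the Vitali fine-cover property is a legitimate (and correctly identified as the essential) substitute for the tiling principle, and the reduction to $d\in[c,R]$ plus the semicontinuity argument for attainment are sound.

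Two points need repair. First, the claim that ``once $|\mathbf{i}|$ is deep enough, $f_{\mathbf{i}}(B_0)$ meets $E$ only inside $f_{\mathbf{i}}(E)$'' is false as stated: under the SSC the relevant condition is $r_{i_k}d_0\le c$, a constraint on the \emph{last} letter rather than on depth, and under the mere OSC (the hypothesis for \eqref{cenosc}) no such inclusion need ever hold. Fortunately you do not need the equality $\mu(f_{\mathbf{i}}(B_0))=r_{\mathbf{i}}^{s}\mu(B_0)$; the one-sided bound $\mu(f_{\mathbf{i}}(B_0))\ge\mu(f_{\mathbf{i}}(B_0\cap E))=r_{\mathbf{i}}^{s}\mu(B_0)$ always holds and already gives $\sum_k|B_k|^{s}=\kappa_0\sum_k r_{\mathbf{i}_k}^{s}\mu(B_0)\le\kappa_0\sum_k\mu(B_k)\le\kappa_0$ for a disjoint subfamily. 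Second, the Vitali extraction you invoke is not the Besicovitch form (your points lie in the balls but are not their centers); you must use the $5r$-covering version, which is available here precisely because \eqref{regularity} makes $\mu$ doubling on the relevant balls --- say so. Finally, to conclude $C_0^{s}E\le\kappa_0$ you must produce covers of mesh $<\delta$ for every $\delta$; this is free (restrict the fine cover to words with $2r_{\mathbf{i}}d_0<\delta$ before extracting), but it should be stated.
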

From now on, $B(x,d)$ denotes the closed radius $d$ ball centered at $x
\in \mathbb{R}^n$.

The statement (\ref{censsc}) is crucial in
our approach since it gives that if $B(x,d)$ is a ball of maximal
inverse density, then $C^s(E)=\frac{(2d)^{s}}{\mu(B(x,d))}$.
Therefore, to obtain the value of $C^s(E)$ we need only to find
an optimal ball and compute its density. This is precisely how
the algorithm proceeds. It searches for balls that maximize
the density function $h_{s}(x,d)=\frac{(2d)^{s}}{\mu(B(x,d))}$.
Actually, by \textit{the} \textit{self-similar tiling principle
}(see\textit{\ \cite{moran1}}), we know that finding an optimal
ball is equivalent to finding an optimal covering. This is so
because we can get an optimal covering tiling the set $E$ with
balls of optimal density.

$E$ may be tiled, without loss of $\mu$-measure, by tiles similar
to a given tile $B$. By similar we mean that the tile is an image
of $B$ under a composition of similitudes in $\Psi $. The only
condition to be imposed on $B$ is that it be closed and have $\mu
B>0.$

Our algorithm computes $C^{s}E$ and provides an
(approximate) ball of maximal density, together with an optimal
covering of $E$ by balls centered at $E$.

Now we describe the main steps of the algorithm. This is done
rigorously in section~\ref{description}. Recall that the goal is
to find the maximal value of
$h_{s}(x,d)=\frac{(2d)^{s}}{\mu(B(x,d))}$ for $x\in E$ and $d \in
[c,R]$ (see (\ref{censsc})). At step $k$, the set $E$ is replaced
by a finite set of points $\{A_{k}\}$ such that $\overline{\cup
A_k}=E$ and the measure $\mu$ is replaced by a discrete
probability measure $\mu_{k}$ supported on $A_k$ and
 converging weakly to $\mu$
(see (\ref{muigulapto}), (\ref{mukdif}) and Lemma~\ref{keylema}
(iv)).
The objective now is to find the
maximum of the discrete density function
$$h_{k}(x,d(x,y))=\frac{(2d(x,y))^{s}}{\mu_k(B(x,d(x,y))} \quad \textrm{with}\  x, y \in A_{k}.$$
Here $d(\ ,\ )$ stands for the Euclidean distance. For each $x\in
A_k$  the algorithm searches for the maximal value of
$h_{k}(x,d(x,y))$ for $y \in A_k$. Once this has been found for every
$x\in A_k$, the algorithm finds the maximum of these values with respect to $x$.
Thus we only need to compute exactly
$\mu_k(B(x,d(x,y))$ for every $y \in A_{k}$. To this end, the
points $y\in A_{k}$ are listed in order of increasing distance to $x$, and
thus  the points preceding a given point $y$ in the list always
belong to the ball $B(x,d(x,y)).$ It is not hard to see that
the exact value of $\mu_k (B(x,d(x,y))$ is obtained from the place
of $y$ in this list (see (\ref{mukigualball}) for the homogeneous
case and (\ref{mudifball}) for the general case). It remains to
show that in the limit $k \nearrow \infty$ this process  converges
to $C^s(E)$. This is done in section~\ref{convergsec}. The
convergence is shown in two steps. First, by means of the Markov
operator $M$ associated to the set $E$ (see
section~\ref{preliminaries} for notation and definitions), the
measures $\mu_k$ are shown to converge weakly to the invariant
measure $\mu$. The basic properties of these measures yield a
sequence of pairs of points $(x_k,y_k) \in A_k\times A_k$ such
that $h_{k}(x_k,d(x_k,y_k)) \to C^s(E)$. However, there is no
reason why these $(x_k,y_k)$ should optimize $h_{k}$.
Nonetheless, we are able to show that
this holds asymptotically, which is enough for our purposes. An interesting technical point is that in the proofs
an essential role is played by a result of Mattila \cite{mattila0} implying that $\mu(\partial B(x,d))=0$.

In section\nolinebreak\ \ref{Ejemplos} we apply the algorithm
to treat several sets whose centered Hausdorff measures were
available in the literature. It is remarkable that in all these
cases the optimal value (and also optimal ball and covering) is
attained at an early iteration. The algorithm also yields
conjectural values (which, in many cases, are upper bounds) for
sets whose measure is unknown. Research is in progress to
explore the rate of convergence and show that the method yields
the precise values of $C^s(E)$. Preliminary results seem to
indicate that, for self-similar sets with less than four
similarities, four decimal digits of accuracy are attainable by
personal computers without any serious effort to optimize the code's design.

\section{Preliminaries\label{Section 2}}\label{preliminaries}

Let $\Psi =\left\{ f_{1},f_{2},...,f_{m}\right\} $ be a list of contracting
similitudes on $\mathbb{R}^{n}$, $|f_{i}(x)-f_{i}(y)|=r_{i}|x-y|$ where $%
0<r_{i}<1$. The unique non-empty compact set satisfying
\begin{equation*}
E=S\Psi (E),
\end{equation*}%
where $S\Psi (X)=\bigcup\limits_{i\in M}f_{i}(X)$, is called the \textit{%
self-similar set associated to }$\Psi $. Sometimes we shall refer to $E$ as
the \textit{attractor} or \textit{invariant set} of the \textit{iterated
function system }(IFS)$\ \Psi =\left\{ f_{1},f_{2},...,f_{m}\right\} $.

We shall use the following notation. Let $M:=\{1,...,m\}$ and
\begin{equation*}
M^{k}=\{\mathbf{i}_{k}=(i_{1},...,i_{k}):i_{j}\in M\quad \forall j=1,...m\}.
\end{equation*}%
For $\mathbf{i}_{k}=i_{1}i_{2}..i_{k}\in M^{k},$ we write
\begin{eqnarray*}
f_{\mathbf{i}_{k}} &=&f_{i_{1}}\circ f_{i_{2}}\circ ...\circ f_{i_{k}}\text{,%
} \\
r_{\mathbf{i}_{k}} &=&r_{i_{1}}r_{i_{2}}...r_{i_{k}}\text{,}
\end{eqnarray*}%
and for $A\subset \mathbb{R}^{n}$, we write
\begin{equation*}
A_{\mathbf{i}_{k}}=f_{\mathbf{i}_{k}}(A).
\end{equation*}%
We shall refer to the sets $E_{\mathbf{i}_{k}}=f_{\mathbf{i}_{k}}(E)$ as the
\textit{cylinder sets of generation} $k$.

Throughout the paper we shall assume that the system $\Psi $ satisfies
the strong separation condition (SSC). That is, $f_{i}(E)\cap
f_{j}(E)=\emptyset$ for all $i\neq j,$ where $i,j\in \{1,...,m\}$. The
Hausdorff dimension of $E$, $\dim _{H}E$, is given by the unique real number
$s$ such that
\begin{equation}
\sum_{i=1}^{m}r_{i}^{s}=1 .  \label{dim}
\end{equation}%
Moreover, the Hausdorff measure of $E$ is finite and positive.

We shall denote by $\mu $ the \textit{natural probability measure}, or
\textit{normalized Hausdorff} \textit{measure}, defined on the ring of
cylinder sets by%
\begin{equation}
\mu (E_{\mathbf{i}})=r_{\mathbf{i}}^{s},  \label{mucil}
\end{equation}%
and then extended to Borel subsets of $E$. The measure $\mu $ is regular in
the sense that there are positive numbers $a$ and $b$ such that%
\begin{equation}
ar^{s}\leq \mu (B(x,r))\leq br^{s}\text{\quad for }x\in E\text{ and }0<r\leq
1.  \label{regularity}
\end{equation}

Let $\mathcal{P}(%
\mathbb{R}
^{n})$ be the space of probability measures on $%
\mathbb{R}
^{n}$. The well known fact that the measure $\mu$ is the unique invariant
measure for the \textit{Markov operator} (see, for example, \cite{Barnsley}%
), plays an important role in our proofs. Let
$\mathbf{M}:\mathcal{P}(\mathbb{R}^{n})\rightarrow \mathcal{P}(%
\mathbb{R}
^{n})$
be the \textit{Markov operator} associated with the IFS $\Psi$ with probabilities $%
\{r_{1}^{s},...,r_{m}^{s}\}$,
\begin{equation}
\mathbf{M}(\nu )=r_{1}^{s}\nu \circ f_{1}^{-1}+...+r_{m}^{s}\nu
\circ f_{m}^{-1}\quad \forall \nu \in \mathcal{P}(\mathbb{R}^{n}).
\label{markvopdef}
\end{equation}%
Then $\mu $ is the unique probability measure satisfying
\begin{equation*}
\mathbf{M}(\mu )=\mu .
\end{equation*}

\section{Description of the algorithm\label{description}}

In this section we introduce an algorithm to compute the centered Hausdorff
measure for self-similar sets satisfying the SSC.

Given $A\subset \mathbb{R}^{n}$, $|A|$ stands for the diameter of
$A$.

In \cite{llorentemoran1} it is proved that if $E$ is a self-similar set
satisfying the SSC, then
\begin{equation}
C^{s}E=\min \left\{ \frac{(2d)^{s}}{\mu (B(x,d))}:x\in E\text{ \
and }c\leq d\leq R\right\}  \label{formulacentred}
\end{equation}%
where $c:=\min_{i,j\in M,i\neq j}dist(f_{i}E,f_{j}E)$ and $R:=|E|$
(see Theorem~\ref{tcentered}).

Our method depends strongly on \eqref{formulacentred} as, to find the
value of $C^{s}E$, we construct an algorithm for minimizing the
value of
\begin{equation*}
h_{s}(x,d):=\frac{(2d)^{s}}{\mu (B(x,d))}
\end{equation*}%
when $x\in E$\ and $c\leq d \leq R$.

The idea is to construct a sequence $\left\{ A_{k}\right\} $\ of countable
sets and a sequence $\{\mu _{k}\}$ of discrete measures supported on the $A_{k}$
such that the $\mu _{k}$ converge weakly to $\mu $ and $\overline{\cup
_{k=1}^{\infty }A_{k}}=E$, where $\overline{A}$ stands for the closure of $A$%
. This allows us to construct another\ sequence $\left\{ \tilde{m}%
_{k}\right\} $ converging to $C^{s}E$ by choosing on the $k$th step a
pair $(\tilde{x}_{k},\tilde{y}_{k})$ $\in $ $A_{k}\times A_{k}$ satisfying%
\begin{equation*}
\tilde{m}_{k}:=h_{k}(\tilde{x}_{k},dist(\tilde{x}_{k},\tilde{y}_{k}))=\min
\{h_{k}(x_{k},dist(x_{k},y_{k})):(x_{k},y_{k}) \in A_{k}\times
A_{k}\},
\end{equation*}%
where $h_{k}(x,d):=\frac{(2d)^{s}}{\mu _{k}(B(x,d))}$.

We describe first the algorithm for self-similar sets where all the
contraction ratios coincide, as this case illustrates better the central idea
of the construction. After this we shall explain the modifications needed to
treat the case of unequal contraction ratios. Observe that if $%
r_{i}=r_{j}:=r$ for all $i\neq j$, the \textit{invariant measure }$\mu $
satisfies that%
\begin{equation}
\mu (E_{\mathbf{i}_{k}})=r^{ks}=\frac{1}{m^{k}}\text{\quad }\forall \text{ }%
\mathbf{i}_{k}\in M^{k}.  \label{mucilig}
\end{equation}

\begin{algorithm}\label{case1}(Homogeneous case: $r_{i}=r_{j}:=r$ \ $\forall i\neq j,$
$i,j=1,...,m$)

\begin{enumerate}
\item[1.] \textbf{Construction of }$\mathbf{A}_{k}$. Let \
$A_{1}=\{x_{i}\in \mathbb{R}^{n}:f_{i}(x_{i})=x_{i},$
$i=1,...,m\}$ be the set of the fixed points
of the similitudes in $\Psi $.
For $k\in \mathbb{N}^{+}$, let $A_{k}=S\Psi (A_{k-1})$ be the set
of $m^{k}$ points obtained by
applying $S\Psi (x)=\bigcup\limits_{i\in M}f_{i}(x)$ to each of the $%
m^{k-1} $ points of $A_{k-1}$.

\item[2.] \textbf{Construction of }$\mathbf{\mu }_{k}$. For all $k
\in \mathbb{N}^{+}$, set
\begin{equation}
\mu _{k}(x)=\frac{1}{m^{k}}\text{\quad }\forall x\in A_{k.}
\label{muigulapto}
\end{equation}%
Thus,
\begin{equation*}
\mu _{k}=\frac{1}{m^{k}}\sum_{i=1}^{m^{k}}\delta _{x_{i}}
\end{equation*}%
is a probability measure with $spt(\mu _{k})=A_{k}=\{x_{1},...,x_{m^{k}}\}$.

\item[3.] \textbf{Construction of }$\mathbf{\tilde{m}}_{k}$

\begin{itemize}
\item[3.1] Given $x_{l}\in A_{k}$, compute the $m^{k}$ distances $%
dist(x_{l},x)$ for every $x\in A_{k.}$.

\item[3.2] Arrange the distances in increasing order.

\item[3.3] Let $d_{1}\leq d_{2}\leq ....\leq d_{m^{k}}$ be the list of
ordered distances. Since for each $j=1,...,m^{k}$, $B(x_{l},d_{j})$ contains
$j+t$ points of $A_{k}$, where $t$ is the cardinality of $%
A_{k}\cap \partial $ $B(x_{l},d_{j})$, i.e. $d_{l}=d_{l+1}=...=d_{l+t}\neq
d_{l+t+1}$, there holds
\begin{equation}
\mu _{k}(B(x_{l},d_{j}))=\frac{j+t}{m^{k}}.  \label{mukigualball}
\end{equation}

\item[3.4] Let $\mathbf{i}_{k}(l)=(i_{1}(l),...,i_{k}(l))\in M^{k}$ be the
unique sequence of length $k$ such that $x_{l}=f_{\mathbf{i}_{k}(l)}(x)$
for some $x\in A_{1}$. We shall use the notation $x_{\mathbf{i}_{k}(l)}:=f_{%
\mathbf{i}_{k}(l)}(x)$. Compute%
\begin{equation}
\frac{(2d_{j})^{s}}{\mu _{k}(B(x_{l},d_{j}))}=\frac{(2d_{j})^{s}}{\frac{j+t}{%
m^{k}}}=\frac{m^{k}\left( 2d_{j}\right) ^{s}}{l+t}  \label{fkigual}
\end{equation}%
\textit{only} for those $m^{k-1}(m-1)$ distances $d_{j}$
satisfying the following condition.

\textbf{Condition:} If
\begin{equation*}
d_{j}=dist(x_{l},y)
\end{equation*}%
for some $y\in A_{k}$ such that $y=f_{\mathbf{j}%
_{k}}(z)=f_{j_{1}...j_{k}}(z) $ for some $z\in A_{1}$, then
\begin{equation}
i_{1}(l)\neq j_{1}.  \label{condition}
\end{equation}

\item[3.5] Find the minimum of the $m^{k-1}(m-1)$ values computed in step 3.4.

\item[3.6] Repeat steps 3.1-3.5 for each $x_{l}\in A_{k}$, $l=1,...,m^{k}.$

\item[3.7] Take the minimum of the $m^{k}$ values computed in step 3.6.
\end{itemize}
\end{enumerate}
\end{algorithm}
\begin{algorithm}[General case]
The main difference between this case and the previous one is that the values
of the measures $\mu _{k}$ are different.
The structure 
of the algorithm is the same in both cases.
Consequently, we shall list only the changes needed to find the measure when the contraction ratios are unequal.
\end{algorithm}

\begin{enumerate}
\item[1.] In step 2, replace (\ref{muigulapto}) with
\begin{equation}
\mu _{k}(x)=r_{\mathbf{i}_{k}}^{s}\text{\quad }\forall x\in A_{k},
\label{mukdif}
\end{equation}%
where $\mathbf{i}_{k}=(i_{1},...,i_{k})\in M^{k}$ is the unique sequence of
length $k$ such that $x=f_{\mathbf{i}_{k}}(y)$ for some $y\in A_{1}$.
Thus, $\mu _{k}$ is a probability measure with $spt(\mu
_{k})=A_{k}=\{x_{1},...,x_{m^{k}}\}$. If, for every $j=1,...,m^{k}$, we
denote by $\mathbf{i}_{k}(j)\in M^{k}$ the unique sequence of length $k$
such that $x_{j}=f_{\mathbf{i}_{k}(j)}(y)$ for some $y\in A_{1}$, then we
can write
\begin{equation}
\mu _{k}=\sum_{j=1}^{m^{k}}r_{\mathbf{i}_{k}(j)}^{s}\delta _{x_{j}}\text{.}
\label{sumdeltasdif}
\end{equation}

\item[2.] In step 3.3, replace (\ref{mukigualball}) with%
\begin{equation}\label{mudifball}
\mu _{k}(B(x_{l},d_{j}))=\sum_{q=1}^{j+t}r_{\mathbf{i}_{k}(i_{q})}^{s},
\end{equation}%
where $\mathbf{i}_{k}(i_{q})\in M^{k}$ is such that
$d_{q}=d(x_{l},x_{i_{q}})$ for all $q=1,...,j+t$.

\item[3.] In step 3.4 , replace (\ref{fkigual}) with%
\begin{equation}
\frac{(2d_{j})^{s}}{\mu _{k}(B(x_{l},d_{j}))}=\frac{(2d_{j})^{s}}{%
\sum_{q=1}^{j+t}r_{\mathbf{i}_{k}(i_{q})}^{s}}.  \label{fkdif}
\end{equation}
\end{enumerate}

\begin{rem}
Note that, in (\ref{fkigual}) and (\ref{fkdif}), we only compute the values
of the inverse density function for those distances between points that
belong to different basic cylinder sets. Namely, if $d_{j}=dist(x_{l},x_{p})$
for some $x_{p}\in A_{k}$, then $\frac{\mu _{k}(B(x_{l},d_{j}))}{%
(2d_{l})^{s}}$ is only computed by the algorithm when $x_{l}\in E_{i}$ and $x_{p}\in E_{j}$ with $i\neq j$.
\end{rem}

\begin{notation}
In the rest of the paper we shall use the following notation. Let $%
A_{k}=S\Psi (A_{k-1})$ be the set of $m^{k}$\ points obtained
after $k$ iterations with $A_{1}=\{x\in \mathbb{R}^{n}:f_{i}(x)=x,i=1,...,m\}$. We write $\ $%
\begin{equation*}
A:=\cup _{k=1}^{\infty }A_{k}.
\end{equation*}
For $k\in \mathbb{N}^{+}$ and for each $x_{l}\in A_{k}$, let $D_{k}^{l}$ be the set of $%
m^{k-1}(m-1)$ distances satisfying condition (\ref{condition}) in the
construction of the algorithm, denote by
\begin{equation*}
D_{k}:=\cup _{l=1}^{m^{k}}D_{k}^{j}
\end{equation*}%
the set of the $m^{2k-1}(m-1)$ distances that appear in step 3.5, and write
\begin{equation*}
D:=\cup _{k=0}^{\infty }D_{k}.
\end{equation*}%
Observe that $D$ only takes values in the interval $[c,R]$ (see
(\ref{formulacentred})).
\end{notation}

From now on, we shall assume, without lost of generality, that $R:=|E|=1$.

\section{Convergence of the algorithm}\label{convergsec}

\subsection{Preliminary results.}

The next two lemmas collect some basic results needed in the proofs of our
theorems. We shall prove only those statements that do not follow directly
from the construction of the algorithm.

\begin{lemma}
\label{keylema}
\noindent
\begin{enumerate}
\item[(i)] For every $x\in E$ there exists a sequence $\{x_{k}\} \subset A_{k}$ such that $\lim_{k\rightarrow \infty }x_{k}=x$.


\item[(ii)] For every $k\in \mathbb{N}^{+}$,
\begin{equation*}
A_{k}\subset A_{k+1}.
\end{equation*}

\item[(iii)] Let $k\in \mathbb{N}^{+}$, $x\in A_{k}$, and
$\mathbf{i}_{k}\in M^{k}$ be such that $f_{\mathbf{i}_{k}}(y)=x$
for some $y\in A_{1}$. Then
\begin{equation}
\mu _{k}(x)=\mu _{k}(E_{\mathbf{i}_{k}})=\mu (E_{\mathbf{i}_{k}}).
\label{keyinq}
\end{equation}

\item[(iv)] For every $k\in \mathbb{N}^{+}$,
\begin{equation}
\mu _{k+1}=\mathbf{M}(\mu _{k}).  \label{markrel}
\end{equation}%
Moreover, $\{\mu _{k}\}_{k\in \mathbb{N}^{+}}$ converges weakly to
$\mu $ and thus
\begin{equation}
\lim_{k\rightarrow \infty }\mu _{k}(A)=\mu (A)  \label{convcjtos}
\end{equation}%
for every set $A$ satisfying $\mu (\partial A)=0$.
\end{enumerate}
\end{lemma}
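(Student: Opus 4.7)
The plan is to handle the four claims in sequence, all of which rest on the bijective indexing of $A_k$ by strings in $M^k$ together with the disjointness provided by the SSC.

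For (i), I would exploit the symbolic coding of $E$: any $x \in E$ has an address $\mathbf{i} = (i_1, i_2, \ldots)$ with $\bigcap_k E_{\mathbf{i}_k} = \{x\}$. Fixing any $y \in A_1$ and setting $x_k := f_{\mathbf{i}_k}(y) \in A_k$, both $x$ and $x_k$ lie in $E_{\mathbf{i}_k}$, whose diameter $r_{\mathbf{i}_k}|E|$ tends to zero; hence $x_k \to x$. Part (ii) is a one-line check: if $x = f_{\mathbf{i}_k}(y)$ with $y \in A_1$ the fixed point of some $f_j$, then $x = f_{\mathbf{i}_k}(f_j(y)) = f_{(\mathbf{i}_k, j)}(y) \in A_{k+1}$.

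For (iii), the essential input is that the SSC forces $|A_k| = m^k$ with a bijection $\mathbf{i}_k \mapsto x_{\mathbf{i}_k}$: inductively, the sets $f_i(A_{k-1})$ lie in the pairwise disjoint cylinders $E_i$ and each $f_i$ is injective. Because the generation-$k$ cylinders are themselves pairwise disjoint and $x_{\mathbf{i}_k} \in E_{\mathbf{i}_k}$, exactly one point of $A_k$ lies in $E_{\mathbf{i}_k}$, namely $x$ itself. Therefore $\mu_k(E_{\mathbf{i}_k}) = \mu_k(x) = r_{\mathbf{i}_k}^s$ by \eqref{mukdif}, and this equals $\mu(E_{\mathbf{i}_k})$ by the defining property \eqref{mucil} of the natural measure.

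Part (iv) splits into a direct algebraic verification and an invocation of standard IFS theory. Pushing $\mu_k = \sum_{\mathbf{i}_k} r_{\mathbf{i}_k}^s \delta_{x_{\mathbf{i}_k}}$ through the definition \eqref{markvopdef} of $\mathbf{M}$ gives $\mathbf{M}(\mu_k) = \sum_{i,\mathbf{i}_k} r_i^s r_{\mathbf{i}_k}^s \delta_{f_i(x_{\mathbf{i}_k})}$, and re-indexing by the length-$(k+1)$ string $(i,\mathbf{i}_k)$ matches $\mu_{k+1}$, once one checks that the last index of the concatenated string yields the correct representative in $A_1$. Iterating gives $\mu_{k+1} = \mathbf{M}^k(\mu_1)$; since the Hutchinson--Barnsley theorem cited in \cite{Barnsley} asserts that $\mathbf{M}^k \nu \to \mu$ weakly for every probability measure $\nu$, we obtain $\mu_k \to \mu$ weakly. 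Then \eqref{convcjtos} for sets $A$ with $\mu(\partial A) = 0$ is the Portmanteau theorem. The main obstacle is the bookkeeping in (iii)--(iv): one must verify that, in the length-$k$ representation $x = f_{\mathbf{i}_k}(y)$, the fixed point $y \in A_1$ is determined by the last index $i_k$ of the string, so that the $m^k$ admissible pairs $(\mathbf{i}_k, y)$ are in bijection with $A_k$. Once this convention is pinned down, all the identifications become routine consequences of SSC and standard IFS theory.
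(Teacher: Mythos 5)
Your argument is correct and follows essentially the same route as the paper: (i) and (ii) are the direct constructions the paper leaves implicit, (iii) is the same counting/bijection argument based on the SSC, and (iv) is the same push-forward computation through the Markov operator followed by the Hutchinson--Barnsley convergence theorem and the Portmanteau characterization. The only nit is a harmless index shift in (i): for a generic fixed $y\in A_1$ one has $f_{\mathbf{i}_k}(y)\in A_{k+1}$ rather than $A_k$ (use the length-$(k-1)$ truncation of the address, or take $y$ to be the fixed point of $f_{i_k}$), which does not affect the limit.
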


\begin{proof}
(i) and (ii) follow directly from the construction of the algorithm.

\begin{enumerate}
\item[(iii)] By (\ref{mucil}), (\ref{mucilig}), (\ref{muigulapto}),
and (\ref{mukdif}), it suffices to notice that the SSC ensures the
existence, for any cylinder set $E_{\mathbf{i}_{k}}$ of generation
$k$, of a unique point $x\in A_{k}\cap E_{\mathbf{i}_{k}}$. The
existence is clear by construction and the uniqueness holds
because the cardinality of $A_{k}$ is equal to the number of
cylinder sets $E_{\mathbf{i}_{k}}$.

\item[(iv)] Let $x\in A_{k+1}$. By the SSC, there exists a unique $j\in
M $ such that $\ f_{j}^{-1}(x)\in $ $A_{k}$. Moreover, if $\mathbf{i}%
_{k+1}(x)=(i_{1}(x),...,i_{k+1}(x))\in M^{k+1}$ is the unique
sequence of length $k+1$ such that
$x=f_{\mathbf{i}_{k+1}(x)}(y)$ for some $y\in A_{1}$, then
$i_{1}(x)=j$ and thus, by (\ref{markvopdef}) and
(\ref{sumdeltasdif}),
\begin{equation*}
\mathbf{M}(\mu _{k})(x)=r_{j}^{s}\mu _{k}\circ
f_{j}^{-1}(x)=r_{j}^{s}r_{i_{2}(x),...,i_{k+1}(x)}^{s}=r_{\mathbf{i}_{k+1}(x)}^{s}=\mu
_{k+1}(x).
\end{equation*}
This proves (\ref{markrel}).

The weak convergence holds because the sequence $\{\mathbf{M}%
^{k}(\nu )\}$ converges weakly to the invariant measure $\mu $ for every
compactly supported probability measure $\nu $ (see \cite{hutchinson}). In
particular, $\{\mu _{k}\}=$ $\{\mathbf{M}^{k}(\mu _{1})\}$ converges weakly
to $\mu $. Finally, this is equivalent to (\ref{convcjtos}) (see, for
example, \cite{Edgar} Theorem 2.5.11).\qedhere
\end{enumerate}
\end{proof}

\begin{lemma}
\label{attainE}If $(x_{0},d)\in E\times \lbrack c,1]$ is such that%
\begin{equation}
C^{s}E=\frac{(2d)^{s}}{\mu (B(x_{0},d))},  \label{minpair}
\end{equation}%
then $\partial B(x_{0},d)\cap E\neq \emptyset $.
\end{lemma}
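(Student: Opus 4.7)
The plan is to argue by contradiction: suppose the ball $B(x_0,d)$ attains the minimum in (\ref{censsc}) but its boundary misses $E$. Then I would shrink the radius slightly without losing any $\mu$-mass, producing a strictly smaller value of $h_s$ and contradicting the infimum characterization in (\ref{cenosc}).

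More concretely, assume for contradiction that $\partial B(x_0,d)\cap E=\emptyset$. Since $E$ is compact and $\partial B(x_0,d)$ is closed and disjoint from it, the continuous function $y\mapsto |y-x_0|$ attains its maximum on the (nonempty, compact) set $E\cap B(x_0,d)$; this maximum value is strictly less than $d$ because no point of $E$ lies on the sphere of radius $d$. Call this maximum $d'$, so that $d'<d$ and
\begin{equation*}
E\cap B(x_0,d')=E\cap B(x_0,d),\qquad\text{hence}\qquad \mu(B(x_0,d'))=\mu(B(x_0,d)).
\end{equation*}
Since $\mu$ is supported on $E$, the measure $\mu(B(x_0,d))=\mu(B(x_0,d'))$ is positive (for instance by the regularity estimate (\ref{regularity})), so we may divide by it.

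Combining these facts,
\begin{equation*}
\frac{(2d')^{s}}{\mu(B(x_0,d'))}\;<\;\frac{(2d)^{s}}{\mu(B(x_0,d))}\;=\;C^{s}E,
\end{equation*}
contradicting the infimum formula (\ref{cenosc}), which asserts that $C^{s}E\le (2d')^{s}/\mu(B(x_0,d'))$ for every pair $(x_0,d')\in E\times(0,\infty)$. Therefore $\partial B(x_0,d)\cap E\neq\emptyset$, as required.

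There is no real obstacle here: the argument is a short compactness-plus-minimality maneuver. The only points requiring a moment's care are (a) verifying that $d'$ defined as the maximum is strictly less than $d$ (this is immediate from compactness once the sphere and $E$ are disjoint) and (b) ensuring $\mu(B(x_0,d))>0$ so that the inequality is meaningful, which follows from $x_0\in E=\mathrm{spt}\,\mu$ and $d>0$.
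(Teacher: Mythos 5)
Your argument is correct and is essentially the same as the paper's: both proceed by contradiction, use compactness of $E$ and $\partial B(x_{0},d)$ to shrink the radius strictly without losing $\mu$-mass, and thereby contradict the minimality of $(x_{0},d)$ in \eqref{cenosc}. The only cosmetic difference is that you take the radius to be the exact maximum distance from $x_{0}$ to $E\cap B(x_{0},d)$, whereas the paper uses $d-\epsilon$ with $0<\epsilon<\operatorname{dist}(E,\partial B(x_{0},d))$.
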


\begin{proof}
Suppose that this is not the case. As both $\partial B(x_{0},d)$ and $E$ are
compact sets, there exists $\ 0<\epsilon <dist(E,\partial B(x_{0},d))$
such that
\begin{equation*}
(B(x_{0},d)\setminus B(x_{0},d-\epsilon ))\cap E=\emptyset .
\end{equation*}%
Thus, $\mu (B(x_{0},d))=\mu (B(x_{0},d-\epsilon ))$, contradicting the
minimality of $(x_{0},d)$.
\end{proof}

Observe that for any pair $(x_{0},d)\in E\times \lbrack c,1]$ satisfying (%
\ref{minpair}), Lemma~\ref{attainE} guarantees the existence of a
point $y\in E$ such that
\begin{equation}
C^{s}E=\frac{(2d)^{s}}{\mu (B(x_{0},d))}=\frac{(2dist(x_{0},y))^{s}}{\mu
(B(x_{0},dist(x_{0},y))}.  \label{atta}
\end{equation}

\subsection{Convergence}

In this section we show the convergence of the algorithm described in
section~\ref{description}. We do it in two steps. First, given $x_{0}$, $r$, and $y$
as in (\ref{atta}), we prove, in Theorem \ref{main}, the existence of a
sequence $\{(x_{k},d_{k})\}_{k=1}^{\infty }$ in $A_{k}\times $ $D_{k}$ such
that

\begin{equation*}
m_{k}:=h_{k}(x_{k},d_{k}):=\frac{(2d_{k})^{s}}{\mu _{k}(B(x_{k},d_{k}))}%
\rightarrow \frac{(2d)^{s}}{\mu
(B(x_{0},d))}=h_{s}(x_{0},d)=C^{s}(E).
\end{equation*}%
However, the algorithm's sequence $\{(\tilde{x}_{k},\tilde{d}%
_{k})\}_{k=1}^{\infty }$ is constructed by choosing on the $k$th step a pair $(\tilde{x}%
_{k},\tilde{d}_{k})$ $\in $ $A_{k}\times D_{k}$ such that
\begin{equation}
\tilde{m}_{k}:=h_{k}(\tilde{x}_{k},\tilde{d}_{k}):=\frac{(2\tilde{d}_{k})^{s}%
}{\mu _{k}(B(\tilde{x}_{k},\tilde{d}_{k}))}=\min
\{h_{k}(x_{k},d_{k}):(x_{k},d_{k})\in A_{k}\times D_{k}\}.
\label{seal}
\end{equation}
Actually, for each $k\in \mathbb{N}^{+}$ there might be more
than one pair $(\tilde{x}_{k},\tilde{d}_{k})$ satisfying
(\ref{seal}). Thus, if we chose a sequence $\{(\tilde{x}_{k},
\tilde{d}_{k})\}_{k=1}^{\infty }$ from the set of pairs selected
by the algorithm, it does not need to coincide with the sequence $
\{(x_{k},d_{k})\}_{k=1}^{\infty }$ given in Theorem \ref{main}.
Therefore, one needs to show that the sequence of minimal values
$\tilde{m}_{k}$ converges to the minimum $C^{s}(E)$. In computer experiments we have
observed that in many cases the sequence $h_{k}(\tilde{x}_{k},\tilde{d}%
_{k}) $ is not monotone. Nonetheless, in Theorem \ref{conv} we prove the desired
convergence.

From now on, we use the notation given in \eqref{seal} and write
 $r_{\max }:=\max \{r_{1},...,r_{m}\}$. We refer to the
sequence $\tilde{m}_{k}$ as \emph{the sequence chosen by the
algorithm}.

\begin{thm}
\label{main}Let $x_{0}$, $r$, and $y$ be as in (\ref{atta}). Then, there
exist sequences $\{x_{k}\}$ and $\{y_{k}\}$ such that
$x_{k},y_{k}\in A_{k}$ for all $k \in \mathbb{N}^{+}$, $x_{k}\rightarrow x_{0}$, $y_{k}\rightarrow y$, $%
d_{k}:=dist(x_{k},y_{k})\rightarrow d=dist(x_{0},y)$, and
\begin{equation}
\frac{(2d_{k})^{s}}{\mu _{k}(B(x_{k},d_{k}))}\rightarrow \frac{(2d)^{s}}{\mu
(B(x_{0},d))}.  \label{convergence}
\end{equation}
\end{thm}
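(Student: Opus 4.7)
The plan is to construct the sequences by density of $\cup_k A_k$ in $E$, then transfer the weak convergence $\mu_k\to\mu$ onto the moving balls $B(x_k,d_k)$ via a sandwich argument, using Mattila's result to control the boundaries.

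First, I would simply invoke Lemma~\ref{keylema}(i) twice to obtain $x_k,y_k\in A_k$ with $x_k\to x_0$ and $y_k\to y$. Continuity of the Euclidean distance then gives $d_k:=\operatorname{dist}(x_k,y_k)\to d$, and therefore $(2d_k)^s\to(2d)^s$. The numerator of \eqref{convergence} is thus settled, and the positivity of the denominator at the limit follows from $C^s E<\infty$.

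Second, the heart of the matter is proving $\mu_k(B(x_k,d_k))\to\mu(B(x_0,d))$. Here both the measures and the sets move, so the portmanteau part of Lemma~\ref{keylema}(iv) cannot be applied directly. My strategy is to squeeze the moving ball by fixed ones centered at $x_0$: given $\epsilon>0$, once $|x_k-x_0|<\epsilon$ and $|d_k-d|<\epsilon$, the triangle inequality yields
\begin{equation*}
B(x_0,d-2\epsilon)\subseteq B(x_k,d_k)\subseteq B(x_0,d+2\epsilon),
\end{equation*}
hence $\mu_k(B(x_0,d-2\epsilon))\le\mu_k(B(x_k,d_k))\le\mu_k(B(x_0,d+2\epsilon))$.

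Third, I would apply \eqref{convcjtos} to each of the fixed balls $B(x_0,d\pm 2\epsilon)$. This requires that these balls be $\mu$-continuity sets, i.e.\ $\mu(\partial B(x_0,d\pm2\epsilon))=0$. The result of Mattila mentioned in the introduction supplies exactly this (restricting $\epsilon$ to avoid the at most countable set of exceptional radii if necessary). Passing to the limit in $k$ gives
\begin{equation*}
\mu(B(x_0,d-2\epsilon))\le\liminf_k\mu_k(B(x_k,d_k))\le\limsup_k\mu_k(B(x_k,d_k))\le\mu(B(x_0,d+2\epsilon)).
\end{equation*}
Finally, letting $\epsilon\to 0$ and using $\mu(\partial B(x_0,d))=0$ (again Mattila's result), both outer terms converge to $\mu(B(x_0,d))$ by continuity of $\mu$ from below and from above on a set of finite measure. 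Dividing the convergent numerator by the convergent, strictly positive denominator yields \eqref{convergence}.

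The main obstacle is the simultaneous variation of the measures $\mu_k$ and the balls $B(x_k,d_k)$; everything else is routine. The sandwich reduces this to evaluating $\mu_k$ on fixed balls, while Mattila's theorem provides the crucial vanishing of the spherical boundary mass that makes the portmanteau step applicable and ensures the outer bounds close up in the limit $\epsilon\to 0$.
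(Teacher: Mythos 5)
Your proposal is correct, and it reaches the conclusion by a genuinely cleaner route than the paper. The paper also begins with Lemma~\ref{keylema}(i) and also exploits $\mu(\partial B(x_0,d))=0$, but it handles the simultaneous motion of the measures and the balls differently: it introduces a bump function $\phi_\delta$ equal to $1$ on $B(x_0,d+\delta)$ and supported in $B(x_0,d+2\delta)$, compares $\mu_k(B(x_k,d_k))$ with $\int\phi_\delta\,d\mu_k$, and then bounds the discrepancy by the $\mu_k$-mass of an annulus $R(2\delta)$ around the sphere, which in turn is dominated by $\mu(R(3\delta))$ via a cylinder-set count using $|E_{\mathbf{i}_k}|\le r_{\max}^k\le\delta$. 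Your sandwich $B(x_0,d-2\epsilon)\subseteq B(x_k,d_k)\subseteq B(x_0,d+2\epsilon)$ followed by a direct application of \eqref{convcjtos} to the two fixed continuity sets bypasses both the test function and the cylinder-counting step; the only structural input you need from $\{\mu_k\}$ is weak convergence, so your argument would apply verbatim to any weakly convergent approximating sequence, whereas the paper's estimate $\mu_k(R(2\delta))\le\mu(R(3\delta))$ leans on the specific atomic, cylinder-indexed form of $\mu_k$. Two minor points to tighten: the positivity of the limiting denominator is most directly read off from the regularity estimate \eqref{regularity} together with $d\ge c>0$ (the paper instead notes that condition \eqref{condition} keeps $\mu_k(B(x_k,d_k))$ bounded away from zero uniformly in $k$); and your hedge about exceptional radii is unnecessary here, since the Mattila result as used throughout the paper gives $\mu(\partial B(x,\rho))=0$ for all centers and radii, though the countability fallback you mention would also suffice.
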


\begin{proof}
The existence of the convergent sequences $\{x_{k}\}$, $\{y_{k}\}$, and $%
\{d_{k}\}$ is given by Lemma \ref{keylema} (i).

As $d_{k}\rightarrow d$ and, by condition (\ref{condition}), $\mu
_{k}(B(x_{k},d_{k}))$ is bounded away from zero for all $k \in
\mathbb{N}^{+}$, we need only to show that
\begin{equation}
|\mu _{k}(B(x_{k},d_{k}))-\mu (B(x_{0},d))|\rightarrow 0\text{ as }%
k\rightarrow \infty .  \label{mainclaim}
\end{equation}%
Let $\delta >0$ and let $\phi _{\delta }$ be a compactly supported continuous function on
$\mathbb{R}^{n}$ such that $0\leq \phi
_{\delta }\leq 1$ everywhere, $\phi _{\delta }\equiv 1$ on
$B(x_{0},d+\delta )$, and $\phi _{\delta }\equiv 0$ off of
$B(x_{0},d+2\delta )$. By the weak convergence of $\mu _{k}$ to
$\mu $ (Lemma \ref{keylema} (iv)), we have that
\begin{equation*}
\int \phi _{\delta }d\mu =\lim_{k\rightarrow \infty }\int \phi _{\delta
}d\mu _{k}.
\end{equation*}%
Moreover,  as  $\mu(\partial B(x_0,d))=0$,
\begin{equation*}
|\mu (B(x_{0},d))-\int \phi _{\delta }d\mu |
\end{equation*}%
is small when $\delta $ is small enough. So (\ref{mainclaim}) holds if
\begin{equation*}
|\mu _{k}(B(x_{k},d_{k}))-\int \phi _{\delta }d\mu _{k}|
\end{equation*}%
goes to zero as $\delta \rightarrow 0$ and $k\rightarrow \infty $.
For each $\delta >0$ take $k_{0}=k_{0}(\delta )\in \mathbb{N}^{+}$
such that for all $k\geq k_{0}$, $B(x_{0},d-\delta )\subset
B(x_{k},d_{k})\subset B(x_{0},d+\delta )$, and $r_{\max }^{k}\leq
\delta $.
Hence
\begin{equation}
\int \phi _{\delta }d\mu _{k}\geq \mu _{k}(B(x_{k},d_{k})),  \label{aproxbk}
\end{equation}%
and so
\begin{eqnarray*}
|\mu _{k}(B(x_{k},d_{k}))-\int \phi _{\delta }d\mu _{k}| &=&\int \phi
_{\delta }d\mu _{k}-\mu _{k}(B(x_{k},d_{k}))\leq \\
&\leq &\mu _{k}(B(x_{0},d+2\delta )\setminus B(x_{k},d_{k}))\leq \mu
_{k}(R(2\delta )),
\end{eqnarray*}%
where $R(\delta ):=B(x_{0},d+\delta )\setminus B(x_{0},d-\delta )$.

There remains only to show the convergence of $\mu _{k}(R(2\delta ))$. Let $\{x_{1},...,x_{J}\}$
be the set of points in $A_{k}\cap R(2\delta )$ and,
for $j=1,...,J$, denote by $\mathbf{i}_{k}(j)\in M^{k}$ the unique sequence
of length $k$ satisfying
\begin{equation*}
f_{\mathbf{i}_{k}(j)}(y)=x_{j}\text{ for some }y\in A_{1},
\end{equation*}%
and by $E_{\mathbf{i}_{k}(j)}$ the unique cylinder set of generation $k$
such that $x_{j}\in E_{\mathbf{i}_{k}(j)}$. Then \eqref{sumdeltasdif} and \eqref{keyinq} imply
\begin{eqnarray*}
\mu _{k}(R(2\delta )) &=&\sum_{j=1}^{J}\mu _{k}(x_{j})=\sum_{j=1}^{J}\mu (E_{%
\mathbf{i}_{k}(j)})\leq \\
&\leq &\mu (\{E_{\mathbf{i}_{k}}\subset E\text{ such that }E_{\mathbf{i}%
_{k}}\cap R_{2\delta }\neq \emptyset \text{ and }\mathbf{i}_{k}\in
M^{k}\})\leq \mu (R_{3\delta }).
\end{eqnarray*}%
The last inequality holds because $|E_{\mathbf{i}_{k}}|\leq r_{\max
}^{k}\leq \delta $ for all $\mathbf{i}_{k}\in M^{k}$ with $k\geq k_{0}$.
This completes the proof of (\ref{mainclaim}) as $\mu (\partial
B(x_{0},d))=0 $ (see \cite{mattila0}).
\end{proof}

\begin{thm}
\label{conv}The sequence $\{\tilde{m}_{k}\}_{k\in \mathbb{N}}$
chosen by the algorithm, given in (\ref{seal}), converges to
$C^{s}(E)$. Moreover, for any $k \in \mathbb{N}$ such that
$\mu_k(B(\widetilde{x}_k,\widetilde{d}_k))=\mu(B(\widetilde{x}_k,\widetilde{d}_k))$, there holds
 $$C^s(E)\le \tilde{m}_{k}.$$
\end{thm}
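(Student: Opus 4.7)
The plan is to establish the two‐sided inequality $\liminf_k \tilde m_k \ge C^s(E) \ge \limsup_k \tilde m_k$, which yields convergence, and then read off the \emph{moreover} clause as a one‐line consequence of (\ref{formulacentred}).

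For the upper bound $\limsup_k \tilde m_k \le C^s(E)$, I would feed Theorem \ref{main} straight into the minimality of $\tilde m_k$. Let $(x_0,d)$ be a minimizer in (\ref{formulacentred}) and $y\in\partial B(x_0,d)\cap E$ the companion point produced by Lemma \ref{attainE}. Theorem \ref{main} gives $x_k,y_k\in A_k$ with $x_k\to x_0$, $y_k\to y$ and $h_k(x_k,d(x_k,y_k))\to C^s(E)$. The point needing checking is admissibility in the algorithm, i.e. $d(x_k,y_k)\in D_k$; this holds for large $k$ as soon as $x_0$ and $y$ lie in distinct first–level cylinders $E_i\neq E_j$, since disjointness of the cylinders then forces $x_k\in E_i$, $y_k\in E_j$ eventually. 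If $x_0, y$ happen to share a first–level cylinder $E_i$, I would replace them by $(f_i^{-1}(x_0), f_i^{-1}(y))$, which by scale-invariance of $h_s$ is again a minimizer, and iterate; the procedure terminates because cylinder diameters decay geometrically while $d\ge c>0$. Once admissibility is secured, $\tilde m_k\le h_k(x_k, d(x_k,y_k))$ by the definition of $\tilde m_k$.

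For the lower bound $\liminf_k\tilde m_k\ge C^s(E)$, I would extract a subsequence along which $\tilde m_k\to\alpha$, $\tilde x_k\to x^*\in E$, and $\tilde d_k\to d^*\in[c,1]$ (compactness of $E$ and of $[c,1]$). The heart of the matter is the sandwich estimate
$$\mu\bigl(B(x,d-r_{\max}^k)\bigr)\;\le\;\mu_k\bigl(B(x,d)\bigr)\;\le\;\mu\bigl(B(x,d+r_{\max}^k)\bigr)$$
valid for every $x\in\R^n$ and $d\ge r_{\max}^k$. It is obtained by converting between $\mu_k$-mass of points and $\mu$-mass of cylinders via Lemma \ref{keylema}(iii): each $x_j\in A_k$ is charged the $\mu$-mass of its unique containing cylinder $E_{\mathbf{i}_k(j)}$, the cylinders are disjoint by the SSC, and each has diameter $\le r_{\max}^k$. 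For the upper inequality, if $x_j\in B(x,d)$ then $E_{\mathbf{i}_k(j)}\subset B(x,d+r_{\max}^k)$, so summing the cylinders is a subset computation; for the lower, any $E_{\mathbf{i}_k}$ meeting $B(x,d-r_{\max}^k)$ is entirely inside $B(x,d)$, so its $A_k$-representative is counted in $\mu_k(B(x,d))$. Since $r_{\max}^k\to 0$, $\tilde x_k\to x^*$, $\tilde d_k\to d^*$, and $\mu(\partial B(x^*,d^*))=0$ by Mattila \cite{mattila0}, the sandwich forces $\mu_k(B(\tilde x_k,\tilde d_k))\to\mu(B(x^*,d^*))$, and hence $\alpha=(2d^*)^s/\mu(B(x^*,d^*))\ge C^s(E)$ by (\ref{formulacentred}).

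The \emph{moreover} clause is immediate: when $\mu_k(B(\tilde x_k,\tilde d_k))=\mu(B(\tilde x_k,\tilde d_k))$, the value $\tilde m_k$ equals $h_s(\tilde x_k,\tilde d_k)$, which is $\ge C^s(E)$ directly from (\ref{formulacentred}) since $\tilde x_k\in E$ and $\tilde d_k\in[c,1]$. The main obstacle I anticipate is the sandwich estimate, which is the step where the structural identity of Lemma \ref{keylema}(iii), the disjointness from the SSC, and the smallness of cylinder diameters all have to be combined cleanly; the admissibility reduction in the upper-bound step is a minor but real technicality that may deserve its own short remark.
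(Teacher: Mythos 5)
Your proposal is correct and follows the paper's strategy in its essentials: the bound $\limsup_k\tilde m_k\le C^s(E)$ by feeding Theorem~\ref{main} into the minimality in (\ref{seal}), the bound $\liminf_k\tilde m_k\ge C^s(E)$ by showing that $\mu_k$ and $\mu$ agree asymptotically on the balls $B(\tilde x_k,\tilde d_k)$, and the \emph{moreover} clause as a one-line consequence of (\ref{formulacentred}). Your sandwich estimate is the same computation as the paper's decomposition into the families $G_k,P_k,R_k$: both convert $\mu_k$-mass into $\mu$-mass of disjoint generation-$k$ cylinders via Lemma~\ref{keylema}(iii) and bound the discrepancy by the $\mu$-measure of an annulus of width comparable to $r_{\max}^{k}$, which is then killed by Mattila's $\mu(\partial B(x,d))=0$. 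The one organizational difference is that the paper establishes (\ref{suniform}) \emph{uniformly} over $E\times[c,1]$ via compactness, while you pass to a convergent subsequence $(\tilde x_k,\tilde d_k)\to(x^{*},d^{*})$ and need the annulus estimate only at the limit point; your version is slightly cleaner because it avoids arguing that the annulus measure tends to zero uniformly in $(x,d)$. One caution about the admissibility reduction, which you rightly raise and which the paper passes over in silence: the identity $h_s(f_i^{-1}(x_0),d/r_i)=h_s(x_0,d)$ requires $B(x_0,d)\cap E\subset E_i$, since $\mu(B(f_i^{-1}(x_0),d/r_i))=r_i^{-s}\,\mu(B(x_0,d)\cap E_i)$, which is strictly smaller than $r_i^{-s}\,\mu(B(x_0,d))$ whenever the optimal ball meets another first-level cylinder. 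Your iteration therefore settles the case $B(x_0,d)\cap E\subset E_i$ (and does terminate there, since each rescaling multiplies the radius by at least $r_{\max}^{-1}$ while the radius must remain at most $1$), but not the residual case where $\partial B(x_0,d)\cap E$ lies in the cylinder of $x_0$ yet the interior of the ball meets other first-level cylinders. Since the paper supplies no argument at all for condition (\ref{condition}) to hold along the approximating sequence of Theorem~\ref{main}, your treatment is no less complete than the original, but that corner case would still need to be excluded for a fully airtight upper bound.
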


\begin{proof}

The second statement in the theorem is immediate, as
$\mu_k(B(\widetilde{x}_k,\widetilde{d}_k))=\mu(B(\widetilde{x}_k,\widetilde{d}_k))$
and \eqref{censsc} together imply
\begin{equation*}
C^s(E)\le h_{s}(\widetilde{x}_k,\widetilde{d}_k))=
h_{k}(\widetilde{x}_k,\widetilde{d}_k))=\tilde{m}_{k}.
\end{equation*}
Thus, we need only to prove the convergence.

 For $\{\tilde{m}_{k}\}_{k\in \mathbb{N}^{+}}$ as in (\ref{seal}),
let $\{\tilde{m}_{k_{i}}\}_{i\in  \mathbb{N}^{+}}$ be a convergent
subsequence of $\{\tilde{m}_{k}\}_{k\in \mathbb{N}^{+}}$ and
$\tilde{m}:=\lim_{i\rightarrow \infty }\tilde{m}_{k_{i}}$. Since
the algorithm only takes values in the set $A\times (D\cap \lbrack
c,1])$, the sequence $\{\tilde{m}_{k}\}_{k\in \mathbb{N}^{+}}$ is
contained in a compact set and it is enough to show that $\tilde{m}=C^{s}(E)$.

Let $\lambda =\tilde{m}-C^{s}(E)$ and for
$\{\tilde{m}_{k_{i}}\}_{i\in \mathbb{N}^{+}}$, let
$\{m_{k_{i}}\}_{i\in \mathbb{N}^{+}}$ be the corresponding subsequence of the convergent sequence $%
\{m_{k}\}_{k\in
\mathbb{N}
^{+}}$. Then, given $0<\epsilon <|\lambda |/2$, there exists
$i_{0}\in \mathbb{N}^{+}$ such that for any $i\geq i_{0}$,
\begin{eqnarray}
|h_{k_{i}}(x_{k_{i}},d_{k_{i}})-C^{s}(E)| &\leq &\epsilon /2,  \label{tri1} \\
|h_{k_{i}}(\tilde{x}_{k_{i}},\tilde{d}_{k_{i}})-\tilde{m}| &\leq
&\epsilon /2. \label{tri2}
\end{eqnarray}%
First we want to show that $\tilde{m}\leq C^{s}(E)$. If this is not the
case, then $\lambda >\epsilon >0$. Together with (\ref{tri1}) and (\ref%
{tri2}) this implies%
\begin{equation*}
h_{k_{i}}(x_{k_{i}},d_{k_{i}})\leq C^{s}(E)+\epsilon
/2=\tilde{m}-\lambda +\epsilon /2\leq
h_{k_{i}}(\tilde{x}_{k_{i}},\tilde{d}_{k_{i}})-\lambda +\epsilon
<h_{k_{i}}(\tilde{x}_{k_{i}},\tilde{d}_{k_{i}}),
\end{equation*}%
contradicting the minimality of $h_{k_{i}}(\tilde{x}_{k_{i}},\tilde{%
d}_{k_{i}})$. Therefore, $\tilde{m}\leq C^{s}(E).$

With the aim of showing the reverse inequality, suppose that there exists
$i_{1}\in \mathbb{N}^{+}$ such that for any $i\geq i_{1}$,
\begin{equation}
h_{k_{i}}(\tilde{x}_{k_{i}},\tilde{d}_{k_{i}})\geq h_{s}(\tilde{x}_{k_{i}},%
\tilde{d}_{k_{i}})-\epsilon /2,  \label{uniform}
\end{equation}%
and assume that $\tilde{m}>C^{s}(E)$. Then $\lambda <-\epsilon <0$ and thus,
(\ref{tri1}), (\ref{tri2}), and (\ref{uniform}) give
\begin{equation*}
C^{s}(E)=\tilde{m}-\lambda \geq h_{k_{i}}(\tilde{x}_{k_{i}},\tilde{d}%
_{k_{i}})-\epsilon /2-\lambda \geq h_{s}(\tilde{x}_{k_{i}},\tilde{d}%
_{k_{i}})-\epsilon -\lambda
>h_{s}(\tilde{x}_{k_{i}},\tilde{d}_{k_{i}}),
\end{equation*}%
contradicting the minimality of
$h_{s}(x_{0},d)=C^{s}(E)$. There remains only to prove
(\ref{uniform}). Notice that, as $\tilde{d}_{k_{i}}\in \lbrack
c,1]$, (\ref{regularity}) implies that it is enough to show that
for any sequence $\{(x_{k},d_{k})\}_{k=1}^{\infty }$ in $A\times
(D\cap \lbrack c,1])$ and $\delta >0$, there exists
$k_{0}\in \mathbb{N}^{+}$ such that for any $k\geq k_{0}$,
\begin{equation}
|\mu _{k}(B(x_{k},d_{k}))-\mu (B(x_{k},d_{k}))|<\delta .  \label{suniform}
\end{equation}%
For every $k\in \mathbb{N}^{+}$ satisfying that $r_{\max }^{k}<c$,
let
\begin{eqnarray*}
G_{k} &=&\{E_{\mathbf{i}_{k}}:\mathbf{i}_{k}\in M^{k}\text{ and
}E_{\mathbf{i}_{k}}\subset B(x_{k},d_{k})\}, \\
P_{k} &=&\{E_{\mathbf{i}_{k}}:\mathbf{i}_{k}\in M^{k}\text{,
}E_{\mathbf{i}_{k}}\cap B(x_{k},d_{k})\neq \emptyset \text{ and
}E_{\mathbf{i}_{k}}\cap
E\setminus B(x_{k},d_{k})\neq \emptyset \}, \\
R_{k} &=&\{E_{\mathbf{i}_{k}}\in P_{k}:\text{ }A_{k}\cap
E_{\mathbf{i}_{k}}\cap B(x_{k},d_{k})\neq \emptyset \text{ }\}.
\end{eqnarray*}%
Then,
\begin{eqnarray*}
R_{k} &\subset &P_{k}, \\
E\cap B(x_{k},d_{k}) &=&G_{k}\cup (B(x_{k},d_{k})\cap P_{k}), \\
\mu_{k}(B(x_{k},d_{k}))&=&\mu_{k}(G_{k})+\mu_{k}(R_{k}), \qquad
\text{and}
\\
 \mu(B(x_{k},d_{k}))&=&\mu(G_{k})+\mu(B(x_{k},d_{k})\cap P_{k}).
\end{eqnarray*}%
Moreover, by (\ref{keyinq}), $\mu (G_{k})=\mu _{k}(G_{k})$ and $\mu
(R_{k})=\mu _{k}(R_{k})$. All this together with the triangle inequality,
gives
\begin{eqnarray*}
|\mu _{k}(B(x_{k},d_{k}))-\mu (B(x_{k},d_{k}))| &=&|\mu
_{k}(G_{k})+\mu _{k}(R_{k})-\mu (G_{k})-\mu (B(x_{k},d_{k})\cap P_{k})|= \\
&=&|\mu _{k}(R_{k})-\mu (B(x_{k},d_{k})\cap P_{k})|\leq \\
&\leq &\mu (R_{k})-\mu (B(x_{k},d_{k})\cap R_{k})+\mu (\left(
B(x_{k},d_{k})\cap P_{k}\right) \setminus R_{k})\leq \\
&\le & \mu (R_{k})+\mu (\left( B(x_{k},d_{k})\cap P_{k}\right),
\setminus
R_{k})\leq \mu (P_{k})\leq \\
&\leq &\mu (B(x_{k},d_{k}+r_{\max }^{k})\setminus B(x_{k},d_{k}-r_{\max
}^{k})),
\end{eqnarray*}%
where the last inequality holds because \mbox{$P_{k}\subset
B(x_{k},d_{k}+r_{\max}^{k})\setminus B(x_{k},d_{k}-r_{\max }^{k})
$}. Thus, we want to show that $ \mu
(B(x_{k},d_{k}+r_{\max}^{k})\setminus B(x_{k},d_{k}-r_{\max
}^{k}))<\delta $ if $k$ is big enough. This is true, since
$E\times \lbrack c,1]$ is a compact set and the continuity of $\mu
$,  together with the fact that $\mu (\partial B(x,d))=0$ (see
\cite{mattila0}), implies that $\mu(B(x,d+r_{\max }^{k})\setminus B(x,d-r_{\max }^{k}))<\delta $ for any $(x,d)\in E\times \lbrack
c,1]$, if $k$ is big enough. This proves
\eqref{suniform} and concludes the proof of the theorem.
\end{proof}
\begin{rem}\label{upbdd}
Note that the second statement in Theorem \ref{conv} provides
an upper bound for $C^s(E)$. If on the $k$th step the algorithm selects
a ball containing all the $k$th generation cylinder sets that it
intersects, $\widetilde{m}_k$ provides us with an upper bound for
$C^s(E)$.  In general, the existence of such a ball at step $k$
is not guaranteed as it depends on the geometry of the set $E$.
However, $\widetilde{m}_1$ will always provide an upper
bound for $C^s(E)$ as the optimal ball selected on the first
step already contains the whole set. Observe that this value is also an
upper bound for the spherical Hausdorff measure. Even when we
do not have such a ball, we can obtain natural upper bounds
by estimating the value of $\mu
(B(\widetilde{x}_k,\widetilde{d}_k))$, as, for every $k \in
\mathbb{N}$ there holds
\[ C^s(E)\le \frac{(2\widetilde{d}_k)^s}{\mu
(B(\widetilde{x}_k,\widetilde{d}_k))}\]
(see section~\ref{Ejemplos} for further discussion and examples).
\end{rem}
\section{Examples\label{Ejemplos}}

The first part of this section is devoted to showing that the
algorithm recovers easily examples from the literature
in which the centered Hausdorff measure has been computed. We have checked how it
performs for the Cantor type sets analyzed in \cite{daitian},
\cite{zhuzhou}, \cite{zhuzhou0}, and \cite{zhuzhou1}. Here we will
illustrate the accuracy of the algorithm by testing it on three
of these examples. We will also see that the algorithm suggests
the correct minimizing balls for $h_{s}(x,d)$. In the cases where
$C^s(E)$ remains unknown the algorithm still provides candidates
for minimizing balls (and hence for optimal coverings, by
\cite{llorentemoran0} and \cite{llorentemoran1}). These potentially
minimizing balls yields conjectural values for $C^s(E)$ that, in
the cases considered here, are rigorous upper bounds (see Theorem~\ref{conv},
Remark~\ref{upbdd}, Example~\ref{Sr}, and Conjecture~\ref{conj}).
In Remark~\ref{finalrm} we point out that, in some cases, we can
use the balls selected by the algorithm to obtain also the
corresponding lower bounds, although the proofs we have so far are
case specific and are not simple. In the last part of this section we
present other examples not included in the literature together
with some conjectures on the precise values of $C^s (E)$ for these
cases.

Recall that, by Theorem~\ref{tcentered}, our goal is to find the minimal value of $h_{s}(x,r):=\frac{%
(2r)^{s}}{\mu (B(x,r))}$ when $x\in E$ \ and $c\leq r\leq R$. For
this the
algorithm selects, at the $k$th step, some of the pairs $(\tilde{x}_{k},%
\tilde{y}_{k})$ $\in $ $A_{k}\times A_{k}$ satisfying%
\begin{equation*}
\tilde{m}_{k}=h_{k}(\tilde{x}_{k},\tilde{d}_{k})=\frac{(2\tilde{d}_{k})^{s}}{%
\mu _{k}(B(\tilde{x}_{k},\tilde{d}_{k}))}=\min
\{h_{k}(x_{k},d_{k}):(x_{k},d_{k})\in A_{k}\times D_{k}\},
\end{equation*}%
where $\tilde{d}_{k}=|\tilde{x}_{k}-\tilde{y}_{k}|$,
$A_{0}=\{x_{i}\in \mathbb{R}^{n}:f_{i}(x_{i})=x_{i},$
$i=1,...,m\}$ is the set of fixed points for the similitudes in
the IFS, and $A_{k}=S\Psi (A_{k-1})$ (see section~\nolinebreak
\ref{description}). We shall refer to
$B(\tilde{x}_{k},\tilde{d}_{k})$ as the minimizing ball (or
interval).

\begin{enumerate}
\item[1.] $\lambda $\textbf{-Cantor sets in the real line}

Let $K(\lambda )$ be the attractor of the iterated function system $%
\{f_{0}(x)=\lambda x,$ $f_{1}(x)=1-\lambda +\lambda x\}$, $x\in
\lbrack 0,1]$. In \cite{zhuzhou} it is proved that if $0<\lambda \leq \frac{1}{3}$, then%
\begin{equation}
C^{s}(K(\lambda ))=2^{s}(1-\lambda )^{s},  \label{centred1}
\end{equation}%
where $s=\frac{\log 2}{-\log \lambda }$ is the Hausdorff dimension
of $K(\lambda )$.

Let $\lambda = \frac{1}{3}$. Then $K(\frac{1}{3})$ is the
\textbf{middle-third Cantor set}. The following table gives the
results of the algorithm for $K(\frac{1}{3})$ after $14$
iterations.
\begin{equation*}
\begin{tabular}{|c|c|c|c|c|}
\hline
 & $\tilde{m}_{k}$ & $(\tilde{x}_{k},\tilde{y}_{k})$ & $\tilde{d}_{k}$ & $%
B(\tilde{x}_{k},\tilde{d}_{k})$ \\ \hline
\begin{tabular}{l}
\vspace{0.25cm}\\
$ k=0$ \\
\vspace{0.25cm}
\end{tabular}
 & $1.54856$ &
\begin{tabular}{c}
$(0,1)$ \\
\vspace{0.01cm}\\
$(1,0)$%
\end{tabular}
& $1$ & $[-1,1]$ \\ \hline
\begin{tabular}{l}
\vspace{0.25cm}\\
$ k=1$ \\
\vspace{0.25cm}
\end{tabular} & $1.03238$ &
\begin{tabular}{c}
$(\frac{2}{3},\frac{1}{3})$ \\
\vspace{0.01cm}\\
$(\frac{1}{3},\frac{2}{3})$
\end{tabular}
& $0.333333$ &
\begin{tabular}{c}
$\lbrack \frac{1}{3},1]$ \\
\vspace{0.01cm}\\
$\lbrack 0,\frac{2}{3}]$
\end{tabular}
\\ \hline
\begin{tabular}{l}
\vspace{0.25cm}\\
$2 \le k \le 13$ \\
\vspace{0.25cm}
\end{tabular}
 & $1.19902$ &
\begin{tabular}{c}
$(\frac{2}{3},0)$ \\
\vspace{0.01cm}\\
$(\frac{1}{3},1)$
\end{tabular}
& $0.666667$ &
\begin{tabular}{c}
$\lbrack 0,\frac{4}{3}]$ \\
\vspace{0.01cm}\\
$\lbrack -\frac{1}{3},1]$
\end{tabular}
\\ \hline
\end{tabular}
\end{equation*}
Note that, from (\ref{centred1}) with $\lambda =\frac{1}{3}$, it
follows that
\begin{equation*}
C^{s}(K(\frac{1}{3}))=\frac{4^{\frac{\log 2}{\log 3}}}{2}\simeq 1.199023,
\end{equation*}%
so the algorithm has found the exact value of the Hausdorff centered measure
already at the third iteration! A simple computation shows that, for any $%
k\geq 2,$
\begin{equation*}\label{optcantor}
h_{k}(\frac{1}{3},\frac{2}{3})=h_{k}(\frac{2}{3},\frac{2}{3})=\frac{(2\frac{2%
}{3})^{s}}{1}=\frac{4^{s}}{2}=C^{s}(K(\frac{1}{3})).
\end{equation*}
Notice also that $h_{k}(\frac{1}{3},\frac{2}{3})$ does not depend on
$k$. In fact, one checks easily that
\begin{equation*}
h_{s}(\frac{1}{3},\frac{2}{3})=h_{s}(\frac{2}{3},\frac{2}{3})=C^{s}(K(\frac{1}{3})).
\end{equation*}

Thus, $B(\frac{1}{3},\frac{2}{3})=[-\frac{1 }{3},1]$ and
$B(\frac{2}{3},\frac{2}{3})=[0,\frac{4}{3}]$ are the actual
minimizing intervals for $h_{s}(x,r)$ and the algorithm has found them
already on the second step.

\item[2.] $(\lambda _{1},\lambda _{2})$\textbf{-symmetry Cantor
sets in the real line}

Let $K(\lambda _{1},\lambda _{2})$ be the symmetric Cantor set defined as the
attractor of the iterated function system $\{f_{0}(x)=\lambda _{1}x,$ $%
f_{1}(x)=\lambda _{2}x+\frac{1-\lambda _{2}}{2},$ $f_{2}(x)=1-\lambda
_{1}+\lambda _{1}x\}$, $x\in \lbrack 0,1]$. By \cite{daitian} we know that
if
\begin{equation}
\frac{1-2\lambda _{1}-\lambda _{2}}{2}\geq \lambda,  \label{conditiondai}
\end{equation}%
then
\begin{equation}
C^{s}(K(\lambda _{1},\lambda _{2}))=1,  \label{centred2}
\end{equation}
where $\lambda \equiv max\{\lambda _{1},\lambda _{2}\}$.

Here is the output of the algorithm when $\lambda
_{1}=\frac{1}{8}$ and $ \lambda _{2}=\frac{1}{5}$:
\begin{equation*}
\begin{tabular}{|c|c|c|c|c|}
\hline
 & $\tilde{m}_{k}$ & $(\tilde{x}_{k},\tilde{y}_{k})$ & $\tilde{d}_{k}$ & $%
B(\tilde{x}_{k},\tilde{d}_{k})$ \\ \hline
\begin{tabular}{l}
\vspace{0.1cm}\\
$ k=0$ \\
\vspace{0.1cm}
\end{tabular} & $1$ & $(\frac{1}{2},1)$ & $0.5$ & $[0,1]$ \\ \hline
\begin{tabular}{l}
\vspace{0.1cm}\\
$ k=1$ \\
\vspace{0.1cm}
\end{tabular} & $1$ & $(\frac{1}{2},1)$ & $0.5$ & $[0,1]$ \\ \hline
\begin{tabular}{l}
\vspace{0.1cm}\\
$ k=2$ \\
\vspace{0.1cm}
\end{tabular} &
$1$ & $(\frac{1}{2},1)$ & $0.5$ & $[0,1]$ \\ \hline
\begin{tabular}{l}
\vspace{0.1cm}\\
$ k=3$ \\
\vspace{0.1cm}
\end{tabular} & $1$ &
$(\frac{1}{2},1)$ & $0.5$ & $[0,1]$ \\ \hline
\end{tabular}%
\end{equation*}%
There is no need to iterate more as this stability is empirical
evidence that the algorithm has found the optimal ball. Indeed, we
have tested the algorithm with this kind of symmetric Cantor sets
for different values of $\lambda_1$ and $\lambda_2$ and, in all
our trials, $[0,1]$ appears as the minimizing interval for
every $k \ge 0$. Thus, $[ 0,1]$ is the natural candidate to be the
minimizing ball for $h_{s}$ as well. In fact, it is easy to use the exact
value of  $C^{s}(K(\lambda _{1},\lambda _{2}))$ given in
\cite{daitian} (see (\ref{centred2})) to check that this is
the case. To see this, notice that for any $\lambda _{1},\lambda
_{2}$ as in (\ref{conditiondai}), $\ A_{0}=\{0,\frac{1}{2},1\}$
and
\begin{equation*}
h_{k}(\frac{1}{2},\frac{1}{2})=h_{s}(\frac{1}{2},\frac{1}{2})=\frac{(2\frac{1}{2})^{s}}{1}=1=C^{s}(K(\lambda
_{1},\lambda _{2})).
\end{equation*}

 \item[3.] \textbf{Cantor type sets in the plane}

Let $S$ be the attractor of the iterated function system $%
\{f_{1},f_{2},f_{3},f_{4}\}$ where $f_{i}(x)=\lambda _{i}x+b_{i}$, $%
i=1,2,3,4 $, $x=(x_{1},x_{2})\in \mathbb{R}^{2}$, $0<\lambda
_{i}\leq \frac{1}{2+\sqrt{2}}$, $b_{1}=(0,0)$, $b_{2}=(1-\lambda
_{2},0)$, $b_{3}=(1-\lambda _{3},1-\lambda _{3})$, and
$b_{4}=(0,1-\lambda _{4})$. In \cite{zhuzhou1} it is proved that if the parameters $\lambda _{1}$,
$\lambda _{2}$, $\lambda _{3}$, $\lambda _{4}$, satisfy the
conditions

\begin{enumerate}
\item[(i)] $2\lambda _{p}\leq (\frac{(1-\lambda _{k})d_{\min
}^{p}}{\sqrt{2^{s}}})^{\frac{1}{1-s}}$, where $k\neq p$, $k$,
$p\in \{1,2,3,4\},$

\item[(ii)] $\frac{\lambda _{k}^{s}+\lambda _{i}^{s}+\lambda
_{j}^{s}}{ (1-\lambda _{k})^{s}}\leq \frac{1}{\sqrt{2^{s}}}$,
where $k\neq i$, $k\neq j$ , $i\neq j$, $k$, $i$, $j\in
\{1,2,3,4\}$,
\end{enumerate}
then
\begin{equation}\label{centreds}
C^{s}(S)=D_{\max }^{-1},
\end{equation}
where%
\begin{eqnarray}
D_{\max } &=&\max_{1\leq t\leq 4}\left\{ \frac{1}{(2\sqrt{2}(1-\lambda
_{t}))^{s}}\right\}  \label{centred3} \\
d_{\min }^{k} &=&\min \left\{ \frac{\sqrt{2^{s}}\lambda _{k}^{s}}{(\min
\{1-\lambda _{i},1-\lambda _{j}\})^{s}},\frac{\lambda _{k}^{s}+\lambda
_{i}^{s}+\lambda _{j}^{s}}{(\sqrt{2}(1-\lambda _{p}))^{s}}\right\}  \notag \\
(k,i,j,p) &\in &\{(1,2,3,4),(2,1,3,4),(3,2,4,1),(4,1,3,2)\}  \notag
\end{eqnarray}%
and
\begin{equation*}
0<\dim _{H}(S)=s<1.
\end{equation*}
Next, we test the algorithm for the case $\lambda _{1}=\lambda _{3}=%
\frac{1}{400}$ and $\lambda _{2}=\lambda _{4}=\frac{1}{20}$. In
this case we have that \nolinebreak{$s=\log
_{20}(\sqrt{3}+1)\approx 0.335495$} and
\begin{equation*}
C^{s}(S)=\left( \frac{19\sqrt{2}}{10}\right) ^{s}\approx 1.\,393\,2
\end{equation*}
see \cite{zhuzhou1}.

The next table shows the results given by the algorithm after six
iterations.

\begin{table}[!ht]
  \centering
  \caption{}
  Output of the algorithm for the case $\lambda _{1}=~\lambda _{3}=
~\frac{1}{400}$ and $\lambda _{2}=\lambda _{4}=\frac{1}{20}$.
\label{tabla3}

\vspace{0.1 cm}

\begin{tabular}{|c|c|c|c|c|}
\hline $k$ & $\tilde{m}_{k}$ & $(\tilde{x}_{k},\tilde{y}_{k})$ &
$\tilde{d}_{k}$ & $ B(\tilde{x}_{k},\tilde{d}_{k})$ \\ \hline
\begin{tabular}{l}
\vspace{0.25cm}\\
$ k=0$ \\
\vspace{0.25cm}
\end{tabular}
& $1.4174$ &
\begin{tabular}{l}
$((0,0),(1,1))$ \\
\vspace{0.01cm}\\
 $((1,0),(0,1))$
\end{tabular}
& $\sqrt{2}=1.41421$ &
\begin{tabular}{l}
$B((0,0),\sqrt{2})$ \\
\vspace{0.01cm}\\
$((1,0),(0,1))$
\end{tabular}
\\ \hline
\begin{tabular}{l}
\vspace{0.25cm}\\
$1 \le k \le 5$ \\
\vspace{0.25cm}
\end{tabular}
 & $1.39321$ &
\begin{tabular}{l}
$((\frac{1}{20},\frac{19}{20}),(1,0))$ \\
\vspace{0.01cm}\\
$((\frac{19}{20},\frac{1}{20}),(0,1))$%
\end{tabular}
& $\frac{19\sqrt{2}}{20}=1.3435$ &
\begin{tabular}{l}
$B((\frac{1}{20},\frac{19}{20}),\frac{19\sqrt{2}}{20})$ \\
\vspace{0.01cm}\\
$B((\frac{19}{20},\frac{1}{20}),\frac{19\sqrt{2}}{20})$
\end{tabular}
\\ \hline
\end{tabular}
\end{table}

We note that the pattern of the previous examples is repeated
here. There exists $k_0$ such that, for all $k \ge k_0$, the same
balls are selected. The results in Table \ref{tabla3} show that
for all $k \ge 1$, the
optimal balls for the discrete density function $h_{k}$ are $B((\frac{1}{20},\frac{19}{20}),\frac{19\sqrt{2}}{20})$ and
$B((\frac{19}{20},\frac{1}{20}),\frac{19 \sqrt{2}}{20})$, so these
should be considered as the natural candidates to minimize $h_{s}$.
As before, by (\ref{centreds}), a simple computation shows that
these candidates are in fact the minimizing balls:
\begin{eqnarray}\label{minejem}
\nonumber
h_{s}((\frac{1}{20},\frac{19}{20}),\frac{19\sqrt{2}}{20})&=&
h_{s}((\frac{19}{20},\frac{1}{20}),\frac{19\sqrt{2}}{20})= \\
&=&h_{k}((\frac{1}{20},\frac{19}{20}),\frac{19\sqrt{2}}{20})=
h_{k}((\frac{19}{20},\frac{1}{20}),\frac{19\sqrt{2}}{20})
 =\frac{\left( 2\frac{19\sqrt{2}}{20}%
\right) ^{s}}{1}= \\ \nonumber&=&\left(
\frac{19\sqrt{2}}{10}\right) ^{s}=C^{s}(S).
\end{eqnarray}
The third equality in (\ref{minejem}) holds because $\mu
_{k}(B((\frac{1}{20},\frac{19}{20}),\frac{19\sqrt{2}}{20}))=\mu
_{k}(B((\frac{19}{20},\frac{1}{20}),\frac{19\sqrt{2}}{20}))=1$ for any $k \geq 1$.

\end{enumerate}

\begin{rem}
\label{finalrm} As we have seen in the preceeding examples the
algorithm is also helpful for finding natural candidates for optimal
balls. For instance, an output repeated for all iterations $k\geq
k_{0}$ is a natural candidate. Once the algorithm has suggested
such an optimal ball, this can be used to provide new proofs of (\ref{centred1}), (%
\ref{centred2}), and (\ref{centred3}). We explain the main idea for
the first case. The other cases can be treated in a similar fashion.

$\lambda $\textbf{-Cantor sets in the real line}.
We noticed that when we applied the algorithm to different $K(\lambda)$ we obtained two values for
$(\tilde{x}_{k},\tilde{y}_{k})$, namely $(\lambda ,1-\lambda )$
and $(1-\lambda ,1-\lambda )$. Once we have a candidate for the
optimal ball,\ we can recover (\ref{centred1}) by showing that\
the minimum of $h_{s}(x,r)$ is attained for the pairs $(\lambda
,1-\lambda )$
and $(1-\lambda ,1-\lambda )$, i.e.%
\begin{eqnarray*}
C^{s}E &=&\min \left\{ \frac{(2r)^{s}}{\mu (B(x,r))}:x\in E\text{ \ and }%
c\leq r\leq R\right\} =\frac{(2(1-\lambda ))^{s}}{\mu (B(\lambda ,1-\lambda
))} \\
&=&\frac{(2(1-\lambda ))^{s}}{\mu (B(1-\lambda ,1-\lambda ))}
=2^{s}(1-\lambda )^{s}.
\end{eqnarray*}
The proof reduces to showing that
\begin{equation}
\frac{(2r)^{s}}{\mu (B(\lambda ,r))}\geq 2^{s}(1-\lambda )^{s}\text{\qquad
for any }r\in \lbrack 1-2\lambda ,1-\lambda ],  \label{cantors}
\end{equation}%
since the other cases are trivial. The inequality \eqref{cantors} can be
proved by hand using the geometry of the set.

For the $(\lambda _{1},\lambda _{2})$-symmetry Cantor sets in the
real line the same method can be used to show that the optimal
interval is actually the one chosen by the algorithm, namely the
interval $[0,1]$. Finally, for the attractor $S$ of the third
example, the candidate for an optimal ball is $B((\lambda
_{1},\lambda _{1}),\sqrt{2}(1-\lambda _{1}))$ whenever $\lambda
_{1}\geq \lambda _{2}\geq \lambda _{3}\geq \lambda _{4}$.

We note that the results in \cite{daitian}, \cite{zhuzhou},
\cite{zhuzhou0}, and \cite {zhuzhou1} are obtained using the
relation between the upper spherical density of the invariant
measure and the centered Hausdorff measure. More precisely, in these references it is shown, that from
the fact that, if $C^{s}(E)<\infty $, there holds
\begin{equation*}
\bar{d}^{s}(C_{|E}^{s},x)=\lim \sup_{r\rightarrow
0}\frac{C^{s}(B(x,r)\cap E)}{(2r)^{s}}=1,
\end{equation*}
for almost all $x\in E$ (see \cite{tricot}), there follows
\begin{equation*}
C^{s}(E)=(\bar{d}^{s}(\mu ,x))^{-1},
\end{equation*}
for almost all $x \in E$, where $\mu =\frac{C_{|E}^{s}}{C^{s}(E)}$ is the invariant measure.
So they proved that
\begin{equation*}
\bar{d}^{s}(\mu ,x)=\lim \sup_{r\rightarrow 0}\frac{1}{h_{s}(x,r)}=\frac{1}{%
2^{s}(1-\lambda )^{s}},
\end{equation*}
for every point $x$ in a certain set of full
$\mu $ measure.
Notice that the method suggested by the algorithm is different since it is
not necessary to pass to the limit.
\end{rem}
Next, we present  the results obtained by the algorithm for a
family of Sierpinski gaskets whose centered Hausdorff measure is
not known.
\begin{exmp}[Sierpinski gasket $S(r)$]\label{Sr}
Let $S(r)$ be the attractor of the system $\Psi=\{f_0, f_1,f_2\}$,
where
\begin{eqnarray}\label{siersim}
f_0(x,y)&=&r(x,y),\\
f_1(x,y)&=&r(x,y)+(1-r,0), \quad \textrm{ and} \\
f_2(x,y)&=&r(x,y)+(\frac{1}{2}(1-r), (1-r)\frac{\sqrt{3}}{2})).
\end{eqnarray}
After testing many examples, we have discovered that for $r<0.25$
the algorithm quickly stabilizes at $
[2(1-r)(r^2+r+1)^{\frac{1}{2}}]^s $, where $s=\frac{-\log 3}{\log
r}$ is the Hausdorff dimension of $S(r)$. We conjecture
that, in these cases,
$$ C^{s}(S(r))=[2(1-r)(r^2+r+1)^{\frac{1}{2}}]^s. $$
We illustrate this with Table \ref{table2}, which shows the output of the algorithm
 for $r=0.2$.
\begin{table}[H]\label{table2}
  \centering
  \caption{}
  Output of the algorithm for the case $r=0.2$.
\label{tabla4}

 \vspace{0.1 cm}

\begin{tabular}{|c|c|c|c|c|}
\hline $k$ & $\tilde{m}_{k}$ & $(\tilde{x}_{k},\tilde{y}_{k})$ &
$\tilde{d}_{k}$ & $ B(\tilde{x}_{k},\tilde{d}_{k})$ \\
\hline
\begin{tabular}{l}
\vspace{0.25cm}\\
$ k=0$ \\
\vspace{0.25cm}
\end{tabular}
& $1.60504$ &
\begin{tabular}{l}
\vspace{0.2cm}
$((\frac{5}{10},\frac{\sqrt{3}}{2}),(1,0))$ \\
$((\frac{5}{10},\frac{\sqrt{3}}{2}),(0,0))$
 \vspace{0.2cm}
\end{tabular}
& $1$ &
\begin{tabular}{l}
\vspace{0.2cm}
$B((\frac{5}{10},\frac{\sqrt{3}}{2}),1)$ \\
$B((\frac{5}{10},\frac{\sqrt{3}}{2}),1)$
\vspace{0.2cm}
\end{tabular}
\\ \hline
\begin{tabular}{l}
\vspace{0.25cm}\\
$ k=1$ \\
\vspace{0.25cm}
\end{tabular}
& $1.51231$ &
\begin{tabular}{l}
\vspace{0.2cm}
$((\frac{2}{10},0),(\frac{5}{10},\frac{\sqrt{3}}{2}))$ \\
$((\frac{8}{10},0),(\frac{5}{10},\frac{\sqrt{3}}{2}))$
\vspace{0.2cm}
\end{tabular}

 & $0.916515$ &
\begin{tabular}{l}
\vspace{0.2 cm}
$B((\frac{2}{10},0), \frac{\sqrt{21}}{5})$ \\
$B((\frac{8}{10},0),\frac{\sqrt{21}}{5})$
\end{tabular}\\
 \hline
\begin{tabular}{l}
 \vspace{0.05cm}\\
$2 \le k \le 8$ \\
\vspace{0.05cm}
\end{tabular}
& $1.48326$ &

\begin{tabular}{l}
$((\frac{9}{50},\frac{\sqrt{3}}{50}),(\frac{5}{10},\frac{\sqrt{3}}{2}))$
\end{tabular}
 & $0.890842$ &
\begin{tabular}{l}
$B((\frac{9}{50},\frac{\sqrt{3}}{50}),\frac{4\sqrt{31}}{25})$
\end{tabular}
\\ \hline
\end{tabular}
\end{table}
The pattern observed in our trials for these Sierpinski gaskets is
the following. For any $k \ge 2$ the optimal ball chosen by the
algorithm is
$B(\bar{x},d(\bar{x},\bar{x_2}))=B(\bar{x},(1-r)\sqrt{r^2+r+1}),$
where $\bar{x}=f_0(f_1(\bar{x}_2))$ and
$\bar{x}_2=f(\bar{x}_2)=f(x_2,y_2) \in \mathbb{R}^2$ is the fixed
point of $f_2(x,y)$ (see Figure~\ref{sier}). Observe that  $S(r)
\subset B(\bar{x},d(\bar{x},\bar{x_2}))$. Hence, for any $k \in
\mathbb{N}$,
$$ \mu_k(B(\bar{x},d(\bar{x},\bar{x_2})))=
\mu(B(\bar{x},d(\bar{x},\bar{x_2})))=1.$$
Then, by
Theorem~\ref{upbdd}, we have a rigorous upper bound for
$C^s(S(r))$, namely
\[C^s (S(r))\le [2(1-r)(r^2+r+1)^{\frac{1}{2}}]^s \qquad \forall \ r<0.25 \ .\]

\begin{figure}[h]
  \includegraphics[width=1.5\textwidth]{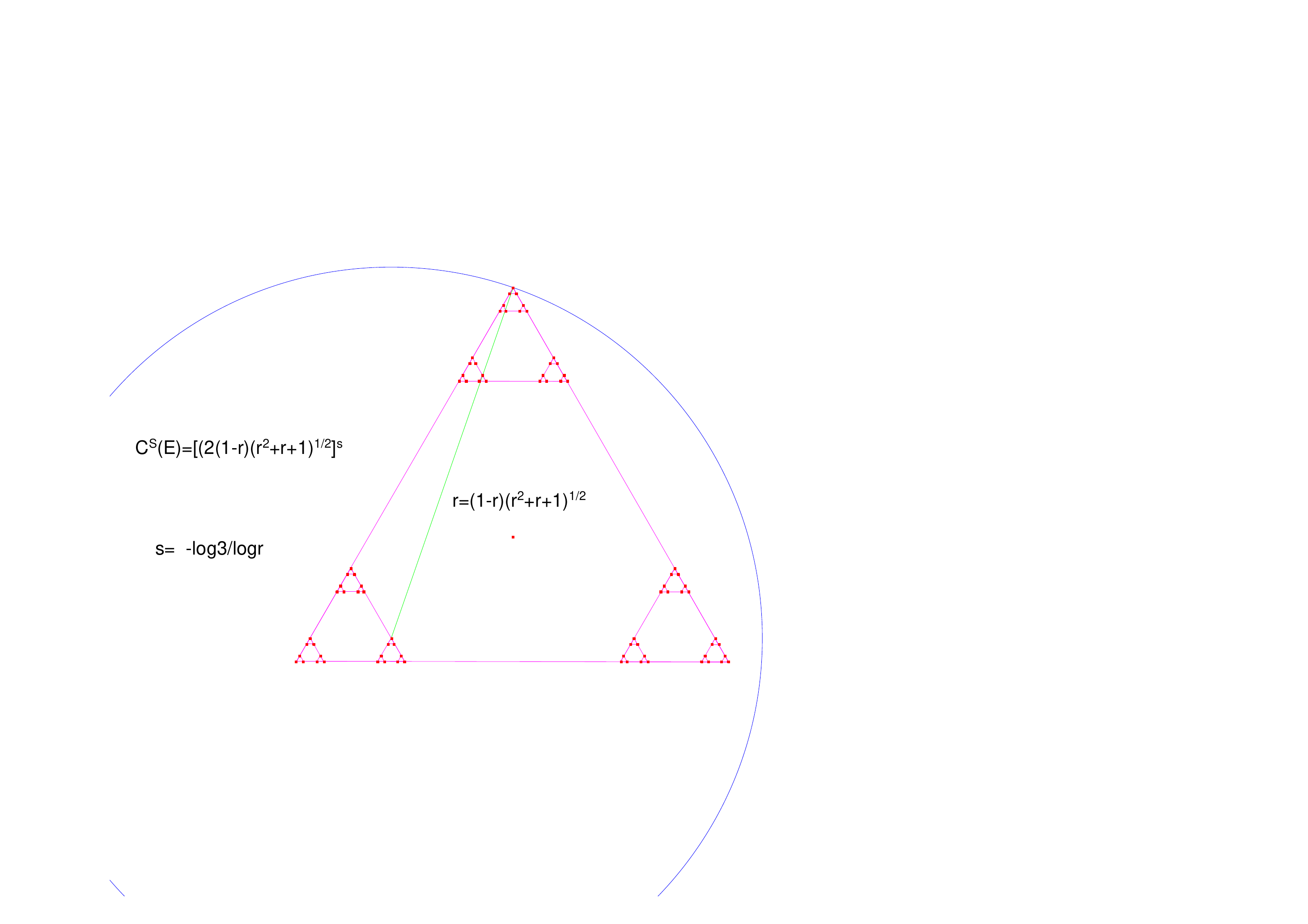}
  \caption{}[Minimizing ball selected by the algorithm for $S(r)$
with $r<~0.25$.]\label{sier}
\end{figure}

\begin{rem}
Note that in all previous examples the minimizing ball (or
interval) is big enough to cover the whole set. That is, if $B$ is
the optimal ball, then $\mu(B)=1$. The reason for this lies in the
fact that the contraction factors are very small compared with the
distances that separate the cylinder sets.

In the case of the Sierpinski gaskets $S(r)$ (Example~\ref{Sr}),
if $r<0.25$, then the candidate for an optimal ball is selected
already at the second step because $r$ is still quite small
in comparison with $c$ (see Theorem~\ref{tcentered} for notation). With larger values for $r$, the centers of the
selected balls are the same after the second step, but the end
point varies slightly. Therefore, the value of $\widetilde{m}_k$
changes slightly after the second step, but in a few steps
the output is stable to three decimal places. This leads, for
example, to the following conjectures for the Sierpinski gasket
$S(\frac{1}{3})$ and the well-known planar $\frac{1}{4}$-Cantor set, $C(\frac{1}{4})$, both self-similar sets with dimension
$1$.
\end{rem}
\newpage
\begin{conj}\label{conj}
Let $S(\frac{1}{3})$ be the attractor of the system
$\Psi=\{f_0,f_1,f_2\}$ given in (\ref{siersim}) with
$r=\frac{1}{3}$ and let $C(\frac{1}{4})$ be the attractor of the system
$\tilde{\Psi}=\{g_0,g_1,g_2, g_3\}$ where
\begin{eqnarray*}
g_0(x,y)&=&\frac{1}{4}(x,y),\\
g_1(x,y)&=&\frac{1}{4}(x,y)+(\frac{3}{4},0), \\
g_2(x,y)&=&\frac{1}{4}(x,y)+(0,\frac{3}{4}),\\
g_3(x,y)&=&\frac{1}{4}(x,y)+(\frac{3}{4},\frac{3}{4}).
\end{eqnarray*}
The conjectural values given by the algorithm for these
self-similar sets are
\begin{enumerate}
    \item $C^1 (S(1/3))\simeq 1.537$.
    \item $C^1(C(\frac{1}{4}))\simeq 1.95$.
\end{enumerate}
We will analyze the planar  $\frac{1}{4}$-Cantor set to show
how to obtain a rigorous upper bound from the results given by
the algorithm. The Sierpinski gasket $S(\frac{1}{3})$ can be treated
in a similar manner.

First we present a table showing the results of seven iterations of the algorithm (see Table~\ref{tabla5}).
For simplicity we
focus on only one of the minimizing balls obtained. In this case
we believe that the description of the pairs
$(\tilde{x}_{k},\tilde{y}_{k})$ by their corresponding codes is
more illustrative than the description by their coordinates, so we will use the
following standard notation: for every $k \in \mathbb{N}$ and
$i_1i_2...i_k \in M^k$,
$$x_{i_1i_2...i_k}=g_{{i_1i_2...i_{k-1}}}(x_{i_k})=g_{i_1}\circ g_{i_2}\circ ...\circ g_{i_{k-1}}(x_{i_k}),$$
where $x_{i_k}=g_{i_k}(x_{i_k})$, $M=\{0,1,2,3\}$, and
$M^k=\{i_1i_2...i_k : i_j \in M \ \forall \  j=1,...,k\}$. Observe
that if $i_t = i_{t+1}=...=i_k$ for some $0<t\le k$, then
$x_{i_1i_2...i_k}=x_{i_1i_2...i_t}$. We shall always use the
shorter notation.

\begin{table}[H]
  \centering \caption{} Output of the algorithm for
  $C(\frac{1}{4})$. \label{tabla5}

 \vspace{0.1 cm}

\begin{tabular}{|c|c|c|c|}
\hline $k$ & $\tilde{m}_{k}$ & $(\tilde{x}_{k},\tilde{y}_{k})$ &
$\tilde{d}_{k}$ \\
\hline
\begin{tabular}{l}
\vspace{0.05cm}\\
 $ k=0$ \\
 \vspace{0.05cm}
\end{tabular}
& $2.66667$ &
\begin{tabular}{l}
\vspace{0.05cm}\\
 $(x_0,x_3)$\\
\vspace{0.05cm}
\end{tabular}
& $1$
\\ \hline
\begin{tabular}{l}
\vspace{0.05cm}\\
 $ k=1$\\
 \vspace{0.05cm}
\end{tabular}
& $1.92296$ &
\begin{tabular}{l}
\vspace{0.05cm}\\
 $(x_{20}, x_{03})$\\
 \vspace{0.05cm}
\end{tabular}

 & $0.901388$

\\
 \hline
\begin{tabular}{l}
 \vspace{0.05cm}\\
$ k =2$\\
 \vspace{0.05cm}
\end{tabular}
& $1.95814$ &

\begin{tabular}{l}
\vspace{0.05cm}\\
$(x_{20}, x_{020})$\\
\vspace{0.05cm}
\end{tabular}
 & $0.795495$
\\ \hline
\begin{tabular}{l}
 \vspace{0.05cm}
$ k =3$
\vspace{0.05cm}
\end{tabular}
& $1.95542$ &

\begin{tabular}{l}
\vspace{0.05cm}\\
$(x_{20}, x_{3})$\\
\vspace{0.05cm}
\end{tabular}
 & $0.790569$

\\ \hline
\begin{tabular}{l}
 \vspace{0.05cm}\\
$ k =4$ \\
\vspace{0.05cm}
\end{tabular}
& $1.95306$ &

\begin{tabular}{l}
\vspace{0.05cm}\\
$(x_{20}, x_{3})$\\
\vspace{0.05cm}
\end{tabular}
 & $0.790569$

\\ \hline
\begin{tabular}{l}
 \vspace{0.05cm}\\
$ k =5$ \\
\vspace{0.05cm}
\end{tabular}
& $1.95388$ &

\begin{tabular}{l}
\vspace{0.05cm}\\
$(x_{20}, x_{020023})$\\
\vspace{0.05cm}
\end{tabular}
 & $0.790662$

\\ \hline
\begin{tabular}{l}
 \vspace{0.05cm}\\
$ k =6$ \\
\vspace{0.05cm}
\end{tabular}
& $1.95417$ &

\begin{tabular}{l}
\vspace{0.05cm}\\
$(x_{20}, x_{020021})$\\
\vspace{0.05cm}
\end{tabular}
 & $0.790662$
\\ \hline
\end{tabular}
\end{table}

For simplicity, we are going to use the value of $\widetilde{m}_3$
in Theorem~\ref{conv} to obtain the upper bound for
$C^1(C(\frac{1}{4}))$. It easily seen that
$\mu_3(B(\widetilde{x}_3,\widetilde{d}_3))=\mu(B(\widetilde{x}_3,\widetilde{d}_3))=\frac{3}{4}+\frac{3}{4^3}+\frac{3}{4^4}$
(see Figure~\ref{cant}), so by  Theorem~\ref{conv} we have that
\begin{equation*}
C^1(C(\frac{1}{4}))\le
f_3(\widetilde{x}_3,\widetilde{d}_3)=1.95542.
\end{equation*}

\begin{figure}[h]
  \includegraphics[width=0.7\textwidth]{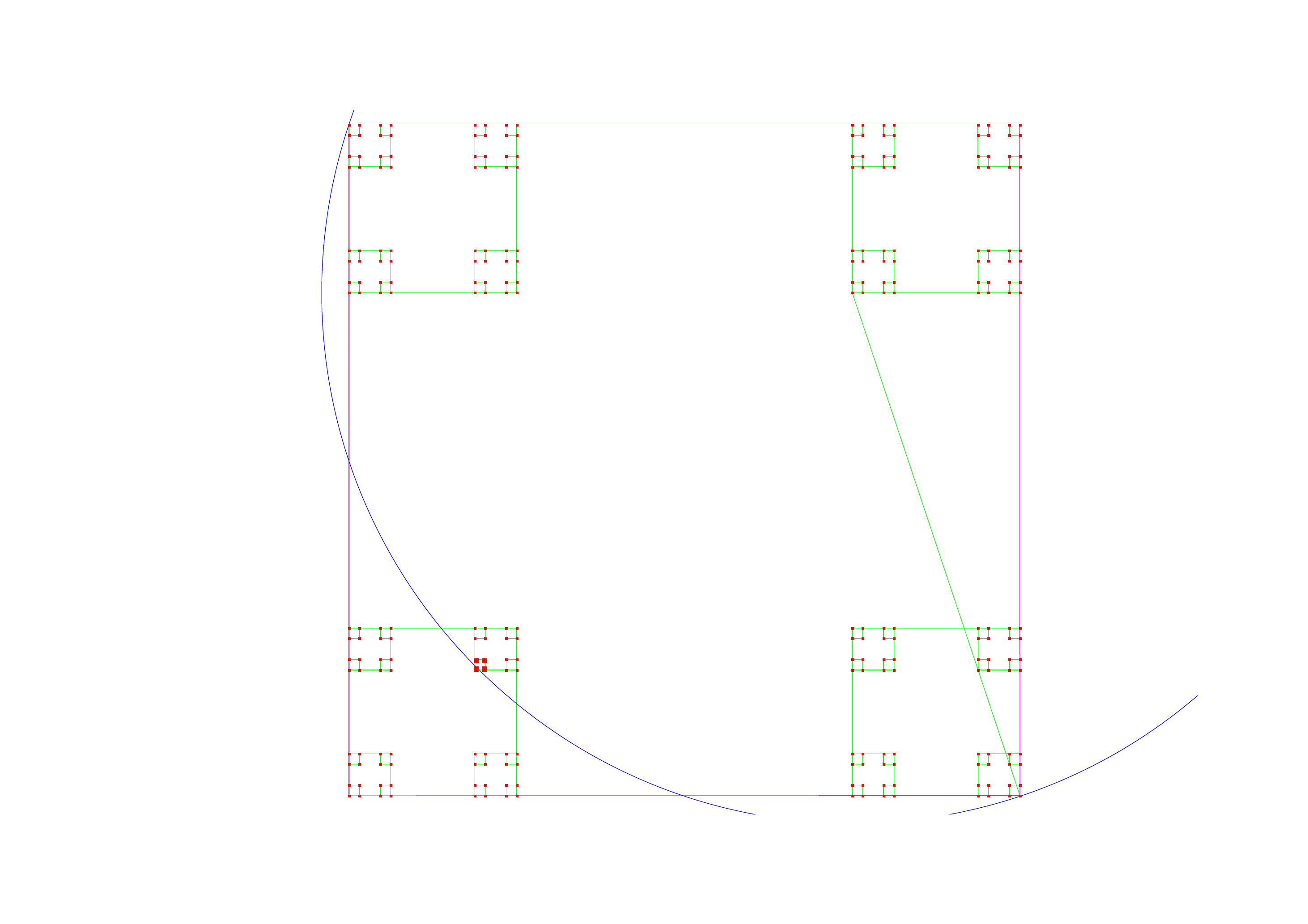}
  \caption{}[$B(\widetilde{x}_4,\widetilde{d}_4)$: Minimizing ball selected by the algorithm  at step $k=4$ for $C(\frac{1}{4})$.]\label{cant}
\end{figure}

\end{conj}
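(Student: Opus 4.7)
The plan is to extract a rigorous upper bound for each conjectural value from the algorithm's output by invoking the second statement of Theorem \ref{conv}: whenever a step $k$ satisfies $\mu_k(B(\widetilde{x}_k,\widetilde{d}_k))=\mu(B(\widetilde{x}_k,\widetilde{d}_k))$, the quantity $\widetilde{m}_k$ is automatically an upper bound for $C^s(E)$. The proof of the upper-bound portion of the conjecture thus reduces to (i) reading off a suitable candidate ball from the stabilized output of the algorithm and (ii) verifying the identity $\mu_k(B)=\mu(B)$ on that ball. Step (ii) is purely combinatorial: it holds as soon as every generation-$k$ cylinder which meets the ball is in fact contained in it, since then both measures agree with the sum $\sum r_{\mathbf{i}}^s$ over the corresponding addresses, by \eqref{mucil} and \eqref{mukdif}.

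For $C(\tfrac{1}{4})$, I would take the ball $B(\widetilde{x}_3,\widetilde{d}_3)$ from the $k=3$ row of Table \ref{tabla5}. Using the explicit coordinates of $\widetilde{x}_3=x_{20}$ and the similarity ratio $\tfrac{1}{4}$, one enumerates the relevant cylinders and checks that the cylinders intersecting $B$ are precisely $g_1(C(\tfrac{1}{4}))$, $g_2(C(\tfrac{1}{4}))$, $g_3(C(\tfrac{1}{4}))$, together with certain sub-cylinders inside $g_0(C(\tfrac{1}{4}))$, giving the asserted value $\mu(B)=\tfrac{3}{4}+\tfrac{3}{4^3}+\tfrac{3}{4^4}$. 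Since the same enumeration governs $\mu_3(B)$ by construction, the hypothesis of Theorem \ref{conv} holds and yields $C^1(C(\tfrac{1}{4}))\le \widetilde{m}_3\approx 1.95542$. The Sierpinski gasket $S(\tfrac{1}{3})$ would be handled by the same template, once Remark \ref{finalrm} is used to identify the candidate ball (namely $B(\bar{x},d(\bar{x},\bar{x}_2))$ with $\bar{x}=f_0(f_1(\bar{x}_2))$), followed by a cylinder count adapted to the triangular geometry.

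The hard part, and the reason the conjectured values are not stated as theorems, is the matching lower bound. Theorem \ref{conv} only guarantees $\widetilde{m}_k \to C^s(E)$; it does not exclude that the infimum in \eqref{formulacentred} is attained at a ball centered at a point of $E\setminus \bigcup_k A_k$, yielding a strictly smaller inverse density. To close the gap, in the spirit of Remark \ref{finalrm}, one must show that no pair $(x,r)$ with $x \in E$ and $c \le r \le R$ satisfies $h_s(x,r)<\widetilde{m}_k$ for the chosen $k$. The natural approach is to exploit the self-similar scaling $\mu(B(f_{\mathbf{i}}(x),r_{\mathbf{i}}r))=r_{\mathbf{i}}^s\mu(B(x,r))$ to reduce the problem to finitely many configurations of $x$ inside a fixed first-generation cylinder, and then to bound $h_s$ on each configuration by a geometric argument. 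The new difficulty, relative to the examples treated in \cite{daitian,zhuzhou,zhuzhou0,zhuzhou1}, is that the candidate ball no longer covers the entire attractor, so the simple monotonicity arguments that underlie \eqref{centred1}--\eqref{centred3} are unavailable, and a strictly finer case analysis is required.
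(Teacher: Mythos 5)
Your proposal matches the paper's own treatment: the rigorous upper bound $C^1(C(\tfrac{1}{4}))\le \widetilde{m}_3\approx 1.95542$ is obtained exactly as you describe, by verifying $\mu_3(B(\widetilde{x}_3,\widetilde{d}_3))=\mu(B(\widetilde{x}_3,\widetilde{d}_3))=\tfrac{3}{4}+\tfrac{3}{4^3}+\tfrac{3}{4^4}$ through a cylinder count and invoking the second statement of Theorem~\ref{conv} (cf.\ Remark~\ref{upbdd}), with $S(\tfrac{1}{3})$ deferred to the analogous argument. Your closing observation that only the upper bound is rigorous and that the matching lower bound is the open issue is precisely why the statement is presented as a conjecture rather than a theorem.
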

\end{exmp}



\newpage


\begin{thebibliography}{99}
\bibitem{ayer} Ayer, E. and R. Stricharz, (1999), \textit{Exact Hausdorff
measure and intervals of maximum density for Cantor sets, }Transactions of
the American Mathematical Society 351, num. 9, 3725-3741.

\bibitem{Barnsley} Barnsley, M. Academic Press Inc.,
Boston, MA, 1988. xii+396 pp.

\bibitem{hutchinson} Hutchinson, J. E. (1981), \textit{Fractals and
self-similarity, }Indiana Univ. Math. J. 30, 713-747.

\bibitem{daitian} Dai, M. and Tian, L. (2005), \textit{Exact
Hausdorff centered measure of symmetry Cantor sets}. Chaos
Solitons Fractals 26 , no. 2, 313--323.

\bibitem{Edgar} Edgar, G. A. (1998), \textit{Integral, probability, and
fractal measures.} Springer-Verlag, New York, x+286 pp.

\bibitem{falconer} Falconer, K. J. (1986) \textit{The geometry of fractal
sets.} Cambridge Tracts in Mathematics, 85. Cambridge University Press,
Cambridge,. xiv+162 pp.

\bibitem{jiazhouzhu} Jia, B.; Zhou, Z. and Zhu, Z. (2002) \textit{A lower
bound for the Hausdorff measure of the Sierpinski gasket.} Nonlinearity 15 ,
no. 2, 393--404.

\bibitem{jiazhouzhuluo} Jia, B.; Zhou, Z.; Zhu, Z. and Luo, J. (2003)
\textit{The packing measure of the Cartesian product of the middle third
Cantor set with itself}. J. Math. Anal. Appl. 288, no. 2, 424--441.

\bibitem{llorentemoran0} Llorente, M. and Mor\'{a}n, M. (2007), \textit{%
Self-similar sets with optimal coverings and packings}, J. Math. Anal. Appl.
334, 1088-1095.

\bibitem{llorentemoran1} Llorente, M. and Moran, M. (2010), \textit{"Advantages of
the centered Hausdorff measure from the computability point of
view"} Math. Scand. 107, no. 1, 103-122.

\bibitem{mattila0} Mattila, P. (1982), \textit{On the structure of
self-similar fractals,} Annales Academiae \ Scientiarum Fennicae,
Series A. I. Mathematica, num. 7, 189-195.

\bibitem{meifeng&} Meifeng, D. and Lixin,T. (2005) \textit{Exact Hausdorff
centered measure of symmetry Cantor sets}. Chaos Solitons Fractals 26, no.
2, 313--323

\bibitem{moran1} Mor\'{a}n, M. (2005) \textit{Computability of the Hausdorff
and packing measures on self-similar sets and the tiling self-similar
principle}, Nonlinearity 18 559-5709

\bibitem{sainttricot} Saint Raymond, X. ; Tricot, C. (1988) \textit{Packing
regularity of sets in }$\mathit{n}$\textit{-space.} Math. Proc.
Cambridge Philos. Soc. 103, no. 1, 133-145

\bibitem{tricot0} Tricot, C.(1982) \textit{Two definitions of fractional
dimension}, Math. Proc. Cambridge Philos. Soc. 91, 57-74.

\bibitem{tricot} Tricot, C. (1991) \textit{Rectifiable and Fractal Sets}, in
Fractal Geometry and Analysis, Kluwer Academic Publishers, 367-403.

\bibitem{zhiweizuoling} Zhiwei, Z. and Zuoling, Z. (2001), \textit{The
Hausdorff centred measure of the symmetry Cantor sets,} Approx. Theory and
its Appl., 18, num 2, 49-57.

\bibitem{zhou} Zhou, Z (1998)\textit{\ The Hausdorff measure of the
self-similar sets---the Koch curve}. Sci. China Ser. A 41 , no. 7, 723--728.

\bibitem{zhoufeng} Zhou, Z. and Feng, L. (2000), \textit{A new estimate of
the Hausdorff measure of the Sierpinski Gasket, }Nonlinearity 13, 479-491.

\bibitem{zhufeng1} Zhou, Z. and Feng, L. (2004) \textit{Twelve open problems
on the exact value of the Hausdorff measure and on topological entropy: a
brief survey of recent results}. Nonlinearity 17, no. 2, 493--502.

\bibitem{zhuzhou} Zhu, Z. and Zhou, Z. (2002) \textit{The Hausdorff centred
measure of the symmetry Cantor sets.} Approx. Theory Appl. (N.S.) 18 , no.
2, 49--57.

\bibitem{zhuzhou0} Zhu, Z. and Zhou, Z. (2008) \textit{The exact measures of
a class of self-similar sets on the plane.} Anal. Theory Appl. 24 , no. 2,
160--182.

\bibitem{zhuzhou1} Zhu, Z. and Zhou, Z. (2008) \textit{The centered covering
measures of a class of self-similar sets on the plane. }Real Anal. Exchange
33, no. 1, 215--231.
\end{thebibliography}
\end{document}